\newtheorem{teo}{Theorem}[section]
\newtheorem{exer}[teo]{\sffamily\bfseries Ejercicio}
\newtheorem{lem}[teo]{Lemma}
\newtheorem{cor}[teo]{Corollary}
\newtheorem{prop}[teo]{Proposition}
\newtheorem{rem}[teo]{Remark}
\numberwithin{equation}{section}
\newcommand{\diag}{\operatorname{Diag}}
\newcommand{\kah}{\mathcal{K}(\mathcal{H})^{ah}}
\newcommand{\dbh}{\mathcal{D}\left(\mathcal{B}\left(\mathcal{H}\right)\right)}
\newcommand{\dkh}{\mathcal{D}\left(\mathcal{K}\left(\mathcal{H}\right)\right)}
\newcommand{\dah}{\mathcal{D}\left(\mathcal{B}\left(\mathcal{H}\right)\right)^{ah}}
\newcommand{\kh}{\mathcal{K}(\mathcal{H})}
\newcommand{\uh}{\mathcal{U}(\mathcal{H})}
\newcommand{\khh}{\mathcal{K}(\mathcal{H})^{h}}
\newcommand{\bh}{\mathcal{B}(\mathcal{H})}
\newcommand{\bah}{\mathcal{B}(\mathcal{H})^{ah}}
\newcommand{\ob}{\mathcal{O}_b}
\newcommand{\oo}{\mathcal{O}}
\newcommand{\uu}{\mathcal{U}}
\newcommand{\ua}{\mathcal{U}_{\aaa}}
\newcommand{\ukd}{{\mathcal {U}}_{k,d}}
\newcommand{\ub}{\mathcal{U}_{\bb}}
\newcommand{\uk}{\mathcal{U}_k}
\newcommand{\aaa}{\mathcal{K}+\C}
\newcommand{\bb}{\D(\mathcal{K}+\C)}
\newcommand{\kk}{\mathcal{K}}
\newcommand{\dd}{\mathcal{D}}
\newcommand{\PP}{\mathcal{P}}
\newcommand{\h}{\mathcal{H}}
\newcommand{\N}{\mathbb N}
\newcommand{\C}{\mathbb C}
\newcommand{\R}{\mathbb R}
\newcommand{\I}{\mathcal I}
\newcommand{\longi}{{\rm L}}
\newcommand{\dist}{{\rm dist}}
\newcommand{\bit}{\begin{itemize}}
	\newcommand{\eit}{\end{itemize}}
\newcommand{\be}{\begin{enumerate}}
	\newcommand{\ee}{\end{enumerate}}
\newcommand{\bx}[1]{\begin{exer}\rm{#1}}
	\newcommand{\ex}{\end{exer}}
\newcommand{\ba}{\begin{array}}
	\newcommand{\ea}{\end{array}}
\newcommand{\bc}{\begin{center}}
	\newcommand{\ec}{\end{center}}
\newcommand{\g}{\gamma}
\newcommand{\D}{\mathcal{D}}
\newcommand{\bq}{\begin{equation}}
\newcommand{\eq}{\end{equation}}
\begin{document}

\title{Geodesic neighborhoods in unitary orbits of self-adjoint operators of  
	$\mathcal{K} +  \C $
}

\author{Tamara Bottazzi $^1$ and Alejandro Varela$^{2,3}$}
\date{\today}

\address{$^1$ Sede Andina, Universidad Nacional de R\'io Negro, (8400) S.C. de Bariloche, Argentina}

\address{$^2$Instituto Argentino de Matem\'atica ``Alberto P. Calder\'on", Saavedra 15 3er. piso, (C1083ACA) Buenos Aires, Argentina}

\address{$^3$Instituto de Ciencias, Universidad Nacional de Gral. Sarmiento, J.	
	M. Gutierrez 1150, (B1613GSX) Los Polvorines, Argentina}

\email{tbottazzi@unrn.edu.ar, avarela@campus.ungs.edu.ar }

\subjclass[2010]{Primary: 22F30, 53C22. Secondary: 22E65, 47B15, 51F25, 58B20.}
\keywords{unitary orbits, geodesic curves, minimality, Finsler metric.}

\maketitle 
\begin{abstract} 	
In the present paper, we study the unitary orbit of a compact Hermitian 
diagonal operator with spectral multiplicity one under the action of the unitary group $\ua$ of the unitization of the compact operators $\kh+\C$
, or equivalently, the quotient $\ua/\ub$. 
We relate this and the action of different unitary subgroups to describe metric geodesics (using a natural distance) which join end points. 
As a consequence we obtain a local Hopf-Rinow theorem.
We also explore cases about the uniqueness of short curves and prove that there exist some of these that cannot be parameterized using minimal anti-Hermitian operators of $\kh+\C$.
\end{abstract}

\section{Introduction}

Let $\mathcal{A}$ be a C$^*$-algebra, $\mathcal{B}$ a von Neumann subalgebra of $\mathcal{A}$, and $\mathcal{U}_\mathcal{A}$, $\mathcal{U}_\mathcal{B}$ its respective unitary groups. 
Theorem II of \cite{dmr1} or Theorem I-2 of \cite{dmr2} (and 
the Remark that follows it) imply that for every element 
$\rho\in\uu_{\mathcal{A}}/\uu_{\mathcal{B}}$, and every tangent vector $x\in 
T_\rho\left(\uu_{\mathcal{A}}/\uu_{\mathcal{B}}\right)$, there exist a minimal 
lift $Z\in \mathcal{A}^{ah}$ of $x$ ($x=Z\rho-\rho Z$ and $\|Z\|\leq \|Z+D\|$ for all 
$D\in \mathcal{B}$) such that the curves
\begin{equation}\label{def: curva gama con Z y rho}
\gamma(t)=e^{t Z} \rho e^{-t Z} 
\end{equation}
are all the possible short curves (under a natural distance) starting at $\rho$ with fixed initial velocity $x$. In this context, we will call this $Z$ a minimal operator.

Moreover in \cite{dmr2} local and global Hopf-Rinow theorems were proved in 
this context with additional hypothesis on the unitary groups involved.

If the assumption of $\mathcal{B}$ being a von Neumann algebra is relaxed, 
Theorem I of \cite{dmr1} proves that every minimal lift $Z$ still produces a short 
curve if $\mathcal{B}$ is only required to be a C$^*$-algebra. 
Nevertheless, in this case such a $Z$ may not exist 
for every tangent vector $x$ (see for example the discussion following 
Proposition 18 in \cite{bottazzi_varela_LAA} or the properties of $Z_2$ defined 
in \eqref{defi Z2} in this paper). Therefore, if $\mathcal{A}$ and 
$\mathcal{B}$ are $C^*$-algebras the parameterization of minimal curves with 
arbitrary initial velocity is not known in general nor the existence 
of geodesic neighborhoods.
The main objective of this work is the study of short curves of a particular example where the subalgebra $\mathcal{B}$ is not a von Neumann algebra.

Denote with $\kk+\C$ the C$^*$-algebra obtained after the unitization of the compact operators in $\bh$, that is $\{X\in\bh: X=K+\theta I, K\in\kk,\ \theta\in\C\}$. $\h$ will be a separable Hilbert space, and $\ua$ will denote the unitary operators of $\kh+\C$.
If we fix an orthonormal basis $\{e_i\}_{i\in\N}$  in $\h$, we can consider matricial characterizations in $\bh$ an diagonal operators. Let $b$ be a compact diagonal self-adjoint operator with spectral multiplicity one, and $\ob$ the orbit
\begin{equation} 
\ob=\ob^{\ua}=\{ubu^*:u\in\ua\}
\end{equation}
This orbit has a structure of a smooth homogeneous space under the action of 
$\ua$ with the identification $\ob \simeq \ua/\ub$ (see for example Lemma 1 of 
\cite{bottazzi_varela_DGA} and the discussion that follows it).

As we comment in Remark \ref{rem: orbitas y metrica finlser iguales}  the 
homogeneous space $\ob$ coincides with the orbit of $b$ under the 
action of several other unitary subgroups. Moreover, a natural Finsler metric 
defined on the tangent spaces (see \ref{rem: espacio tangente Ob}) and a 
distance (see \ref{def: distancia rectificable}) in $\ob$ is also preserved if 
we consider those different unitary subgroups.

%
%

In this context, we analyze geodesic neighborhoods of $\ob$ and cases of short curves satisfying initial conditions or connecting given endpoints that cannot obtained using minimal operators $V\in \left(\kh+\C\right)^{ah}$.
 In Theorem \ref{teo minimal} it is shown that short curves in $\ob$
of the form \eqref{def: curva gama con Z y rho} can be constructed using minimal operators $Z\in \left(\kh+ \dd(\bh) \right)^{ah}$.
These geodesics and a result from \cite{dmr1} allows us to prove in \ref{teo: hopf rinow local} a Hopf-Rinow local theorem for $\ob$.
We also consider certain types of minimal operators with diagonal belonging to $\dd(\bh)\setminus \dd(\kh+\C)$ and construct with them short curves $\g$ in $\ob$ (for the distance \eqref{def: distancia rectificable}) such that in a fixed neighborhood, there is not any curve $\delta$ of the form $\delta(t)=e^{tV}be^{-tV}$ with $V$ a minimal vector in $(\kk+\C)^{ah}$  that starts in $b$ and ends in $\g(t)$ (see \ref{teo: unicidad curva minimal}). This means that there exist geodesics that cannot be obtained using minimal vectors $V$ in $(\kk+\C)^{ah}$.

In the general context mentioned at the beginning of this section the previous results imply that if $\mathcal{B}$ is only required to be a C$^*$-algebra then, even when Hopf-Rinow type theorems can be obtained, there exist short curves in $\mathcal{U}_\mathcal{A}/\mathcal{U}_\mathcal{B}$ that cannot be described using minimal elements of $\mathcal{A}$.

\section{Preliminaries}
Let $\bh$ be the algebra of bounded operators on a separable Hilbert space 
$\h$, and $\kk(\h)$ and $\uh$ the compact and unitary operators respectively. 
If an orthonormal basis $\{e_i\}_{i\in\N}$ is fixed we can consider matricial 
representations of each $A\in\bh$, that is 
$A=(A_{i,j})_{i,j\in\N}=(\left\langle Ae_i,e_j \right\rangle)_{i,j\in\N}$
 and diagonal operators which we denote with $\dbh$. Any $D\in\dbh$ fulfills $\left\langle De_i,e_j \right\rangle=0 $ for every $i\neq j $. 
 
 With the preceding notation, we define 
 columns (and similarly, rows) of any 
 operator $A\in\bh$ as 
 $c_j(A)=\sum_{i=1}^{\infty}\left\langle 
 Ae_i,e_j 
 \right\rangle e_j=\left(A_{1j},A_{2j},... 
 \right) \subset \ell^2$, for each 
 $j\in\N$. 

We call $\uk$ the Fredholm subgroup of $\uu(\h)$, defined as
\begin{equation}\label{def de Uk}
	\begin{split}
	\uk&=\{u\in \mathcal{U}(\h): u-I\in \kk(\h)\}\\
	&=\{u\in\uh: \exists\  K\in\kah, u=e^K\},
	\end{split}
\end{equation}	
and the subgroup studied in \cite{bottazzi_varela_studia}:
\begin{equation}\label{definicion de Ukd}
\begin{split}
\ukd&=\{u\in \mathcal{U}(\h): u-e^{D}\in \kk(\h) \text{ for } D\in \left(\dbh\right)^{ah}  \}\\
&=\{u\in\uh: \exists\ K\in\kah \text{ and } \left(\dbh\right)^{ah}, \text{ such that } u=e^{K}e^D\},
\end{split}
\end{equation}
where $I$ is the identity in $\bh$ and the superscript $^{ah}$ means 
anti-Hermitian as well as $^h$ means 
Hermitian.

Consider the unitization of $\kh$
$$\aaa=\kh+\{\lambda I:\ \lambda \in \C\}\subset \bh,$$
endowed with the norm 
$$
||K+\lambda I\|_{\aaa}= 
\sup\{\|K C+\lambda C\|: C\in \kh, \|C\|=1\},
$$
for any $K+\lambda I\in \aaa$ (here $\|\cdot \|$ is the usual operator norm in $\bh$). The $\|\ \|_{\aaa}$ norm coincides with the operator norm in $\bh$:
$$
||K+\lambda I||_{\aaa}= \|K+\lambda I\|
$$
(this follows after considering the multiplication $(K+\lambda I)C$ for $C=h\otimes h\in\kh$, with $h\in H$ and $\|h\|=1$).

The space $(\aaa,||\cdot||_{\aaa})$ is a unital $C^*$-algebra with unit 
$I_{\aaa}=0+1.I=I$.

Let $\dkh= \dbh \cap \kh $ and define the subspace of diagonal operators of $\aaa$, given by 
$$
\bb=\dkh+\{\lambda I:\ \lambda \in \C\}.
$$
It is apparent that $\bb$ is a unital $C^*$-subalgebra of $\aaa$ and $I_{\aaa}\in \bb $.

If $u=K+\lambda I\in (\aaa)$ is a unitary operator, direct computations show 
that $KK^*=K^*K$ and $|\lambda|=1$. Therefore, there exists $\theta\in\R$ 
($\lambda=e^{i \theta}$) such that $u$ verifies that $u-e^{i\theta I}\in\kh$. 
Then $u\in \ukd$ (see \eqref{definicion de Ukd}) and therefore there exists $ 
K_0\in \kah $ such that $u=e^{K_0} e^{i\theta I}$ for the same $\theta$ (as 
seen in the proof of Proposition 3.3 in  \cite{bottazzi_varela_studia}). 
Moreover, it is apparent that if $u=e^{K} e^{i\theta I}$, with $\theta\in\R$ 
and $K\in\kah$, then $u=  e^{i\theta I} +\left(\sum_{n\geq 1} \frac{K^n}{n!} 
e^{i\theta I} \right)  \in \ua$, the unitary group of $\aaa$.

Similar considerations can be made with the unitaries of $\bb$. If $v\in \ub$ then $v=d+e^{i\theta I}$ with $d\in\dkh$ and $\theta\in\R$. This implies that $|d_{j,j}+e^{i\theta}|=1$ for all $j\in\N$ and therefore $ (d+e^{i\theta I})=e^{iR} $ with $R$ a real diagonal matrix such that $R_{j,j}\to \theta$ when $j\to\infty$. Conversely, if $v=e^{i R}$ with $R_{j,j}\in\R$ and $\lim_{j\to \infty} R_{j,j}=\theta$, then $v=e^{i(R-\theta I)} e^{i\theta I}\in \ub$ because $\lim_{j\to\infty}(R_{j,j}-\theta )=0$ and therefore $i(R-\theta I)\in\kah$.

Then the unitary groups of $\aaa$ and $\bb$ can be described as follows:
\begin{equation}\label{def de UA}
\begin{split}
\ua&=\uk.\{e^{i\theta}I:\ \theta\in \R\}=\{e^Ke^{i\theta I}: K\in\kah \text{ and } \theta \in \R \}\\
&=\{e^{K+i\theta I}:\ K\in \kah\ \text{and}\ \theta\in \R\}
\end{split}
\end{equation} 
and
\begin{equation*}
\begin{split}
\ub&=\{ d+e^{i\theta I}: d\in \dkh , \ \theta\in \R \text{ such that } |d_{j,j}+ e^{i\theta I}|=1 \}\\
&=\{ e^{d_0+i \theta I} :\ d_0\in\dkh^{ah} \text{ and } \theta\in\R\}\\
&=\{ e^{L_0} :\ L_0\in\dbh^{ah} \text{ such that } \lim_{j\to\infty} (L_0)_{j,j}=i \theta \text{ with } \theta\in\R \}.
\end{split}
\end{equation*}
 
\section{The homogeneous unitary orbit of a self-adjoint 
compact operator}
 Given a subgroup $\mathcal{U}\subset \mathcal{U}(\h)$ we will denote with 
 $\oo_b^{\mathcal{U}}$ the orbit of self-adjoint element $b\in\kh^h$ by a 
 subgroup $\mathcal{U}\subset \uh$, that is 
 $$
 \oo_b^{\mathcal{U}}=\{ubu^*:u\in\mathcal{U}\}
 $$
 Let $b=\diag\left(\{b_i\}_{i\in \N}\right)\in \D(\kh^h)$ denote the diagonal 
 operator with the sequence $\{b_i\}_{i\in \N}$ in its diagonal. We study the  
 unitary orbit of $b\in\kh\subset\aaa$ with $b_i\neq b_j$ for each $i\neq j$  
 under the action of $\ua$:
\begin{equation} \label{def: orbita ob}
\ob=\ob^{\ua}=\{ubu^*:u\in\ua\}.
\end{equation}
Observe that it is apparent that the following inclusions hold for these subgroups of $\uh$:
$$
\uk\subsetneq \ua\subsetneq\ukd.
$$
Nevertheless the orbit of $b$ under the three subgroups is the same set $\ob$ because 
\begin{equation}
\label{eq: las orbitas son iguales con distintos subgrupos}
e^{K }be^{-K }=e^{K}e^{itI}be^{-itI}e^{-K}=e^{K+itI}be^{-K-itI}
\end{equation}
for $t\in\R$ and then $\oo_b^{\uk}=\oo_b^{\ukd}$ (see for example Remark 4.4 in 
\cite{bottazzi_varela_studia}). Moreover, we will show further that the three of them share the same natural Finsler metric on the tangent spaces (as seen in Remark 4.5 in 
\cite{bottazzi_varela_studia} for the cases of $\ob^{\uk}$ and $\ob^{\ukd}$).

 $\ob$ has a smooth structure as described in Lemma 1 in \cite{bottazzi_varela_DGA} and the comments that follow it.  


The isotropy at any $c\in \ob=\ob^{\ua}$ is $\I_c=\{u\in \ua:\ ucu^*=c\}$. In particular, 
$$
\I_b=\left\{u\in \ua:\ [u,b]=0 \right\}=\ub.
$$
\begin{rem} \label{rem: espacio tangente Ob}
If $c\in \ob$, the following identification can be made:
$$
T_c\ob \cong (T\ua)_1/(T\I_b)_1 
=(\kk+\C)^{ah}/(\dd(\kk + 
\C))^{ah}. 
$$
\end{rem}
Observe that 
$$(\aaa)^{ah}/\bb^{ah}=\left\lbrace [X]:\ X=K_0+i\theta_0 I,\ K_0\in \kah\ \text{and}\ \theta_0\in \R  \right\rbrace. $$

where $[X]$ is the class defined as $Y\in[X]$ iff $Y=X+d+i\theta I$ for $d\in \D(\kah)$ and $\theta\in \R$. This 
 quotient space is endowed with the usual quotient norm, that in this case for 
 $X=K_0+i\theta_0 I$ is 
\begin{equation*}
\begin{split}
\|[X]\|=\|[K_0+i\theta_0 I]\|&= \inf_{\theta\in\R;\ d\in \D(\kah)} \|K_0+i\theta_0 I+d+i\theta I\|_{\aaa}
\\
&=\inf_{\theta\in\R;\ d\in \D(\kah)} \|K_0+d+i\theta I\|_{\aaa}.
\end{split}
\end{equation*}
In this context, a natural Finsler metric for any $x\in T_b\ob$, $x=Xb-bX$, with $X\in(\aaa)^{ah}$ is
\begin{equation}\label{eq: def metrica de Finsler}
\begin{split}
\|x\|_b&=\inf\{\|Y\|:\ Y\in (\aaa)^{ah},\ Yb-bY=Xb-bX\}\\
&=\mathop{\inf_{d\in \D(\kah)}}_{\theta\in\R} \|X+d+i\theta I\|_{\aaa} =\mathop{\inf_{d\in \D(\kah)}}_{\theta\in\R} \|X+d+i\theta I\|
\end{split}
\end{equation}
where $X+d+i\theta I$ is any element of the class $[X]=\{Y\in(\aaa)^{ah}: 
Y=X+d+i\theta I, \text{ for } \theta\in\R\text{ and } d\in \D(\kah)\} $ in 
$(\aaa)^{ah}/\bb^{ah}$.
 
An element $Y\in \bah$ such that $Yb-bY=x$ and 
$\left\|Y\right\|_{\aaa}=\|Y\|=\|[X]\|=\left\|x\right\|_b$ will be called a 
minimal lifting for $x$, and its diagonal will be a minimal diagonal 
approximant (or minimizing diagonal) for $Y$. This operator $Y$ may not be 
compact nor unique (see 
\cite{bottazzi_varela_LAA}), and it will be called a minimal operator.

Given any $c=e^{K+itI}be^{-K-itI}=e^{K }be^{-K }\in\oo_b$ ($K\in\kah$, 
$t\in\R$) we can define the norm in its tangent space $T_c \oo_b$ using that 
$z=Zc-cZ\in T_c\oo_b$ for $Z=e^KXe^{-K}\in\kah$ and $Xb-bX\in T_b \oo_b$. Then 
the Finsler norm in $T_c \oo_b$ is  $\|z\|_c =\|[Z]\|=\inf\{\|Y\|:\ Y\in 
(\aaa)^{ah},\ Yc-cY=Zc-cZ\}=\|[X]\|$.
Note that this norm is invariant under the action of $\ua$. 
 
\begin{rem}\label{rem: orbitas y metrica finlser iguales} 
As it was mentioned before in \eqref{eq: las orbitas son iguales con distintos subgrupos}, the following orbits are equal
	$$
	 \oo_b^{\ua}=\oo_b^{\uk}=\oo_b^{\ukd} 
	$$
for $\uk\subsetneq \ua \subsetneq \ukd$ defined in \eqref{def de Uk}, \eqref{def de UA} and \eqref{definicion de Ukd} respectively (see Proposition 4.1, 4.4 and Remark 4.7 in \cite{bottazzi_varela_studia}). 
 
Let $X\in(\aaa)^{ah}$, $X=K+it I$ with $K\in\kah$ and $t\in\R$, then 
\begin{equation}\label{eq: igualdades entre infimos para calculo normas 
cociente}
\begin{split}
\inf_{D\in \D(\bah)} \|K+D\|&=\inf_{D\in \D(\bah)} \|\overset{X}{\overbrace{K+itI}}+D\|\leq
\mathop{\inf_{d\in \D(\kah)}}_{\theta\in\R} \|X+i\theta I+d\|\\ 
&
=\mathop{\inf_{d\in \D(\kah)}}_{\theta\in\R} \|K+i\theta I+d\|
\leq 
 \inf_{d\in \D(\kah)} \|K+d\|.
\end{split}
\end{equation}
But since  $\displaystyle
\inf_{d\in \D(\kah)} \|K+d\| =\inf_{D\in \D(\bah)} \|K+D\|
$, [Prop. 5,\cite{bottazzi_varela_LAA}], the previous inequalities imply that all those infimums are equal.

This means that also the Finsler metric defined for $T_c \ob$  in \eqref{eq: 
def metrica de Finsler} coincides if we consider any of the quotients 
$\kh^{ah}/ \mathcal{D}(\kah)$, $(\aaa)^{ah}/\bb^{ah}$ or $(\kah +\dah) /\dah$ 
(see also Remark 4.5 in \cite{bottazzi_varela_studia}).
\end{rem}
\begin{rem}
	Observe that nevertheless $\ob\subsetneq \ob^{\uh}$. This follows because 
	if we suppose that for every $X\in\bah$ holds that $e^X b e^{-X}=e^K b 
	e^{-K}$ for $K\in \kah$, then $e^{-K}e^X$ must be a diagonal 
	unitary. Therefore, for all $X\in\bah$ we could write that $e^X=e^K e^D$ 
	for $D\in\dah$ which is known to be false 
	(see for example Remark 3.7 in \cite{bottazzi_varela_studia}).
\end{rem}
Consider piecewise smooth curves $\beta:[a,b]\to \ob^{\ua}$. We define 
\begin{equation}\label{def: longitud rectificable}
\longi(\beta)=\int_a^b\left\|\beta'(t)\right\|_{\beta(t)}\ dt\ 
\text{ , and }
\end{equation}
\begin{equation}\label{def: distancia rectificable}
\dist(c_1,c_2)=\inf\left\{\longi(\beta):\beta\ {\rm is\ smooth},\beta(a)=c_1,\beta(b)=c_2\right\} 
\end{equation}
as the rectifiable length of $\beta$ and distance between two points $c_1, c_2\in\ob^{\ua}$, respectively.

%
	
\section{A short curve in $\ob$ obtained acting with a minimal operator not 
belonging to $ 
(\aaa)^{ah}/\bb^{ah}$}
Consider the following operator described as an infinite matrix:
\begin{equation*}
Z_{\delta,\gamma}=i
\left(
\begin{array}{ccccccccccccc}
0 & -\delta   & \gamma    & -\delta ^2  & \gamma ^2  & -\delta ^3  & \gamma ^3  &\cdots \\
-\delta  & 0 & \gamma  & -\delta ^2 & \gamma ^2 & -\delta ^3 & \gamma ^3 &\cdots  \\
\gamma   & \gamma  & 0 & -\delta ^2 & \gamma ^2 & -\delta ^3 & \gamma ^3 &\cdots  \\
-\delta ^2  & -\delta ^2 & -\delta ^2 & 0 & \gamma ^2 & -\delta ^3 & \gamma ^3 &\cdots \\
\gamma ^2  & \gamma ^2 & \gamma ^2 & \gamma ^2 & 0 & -\delta ^3 & \gamma ^3 &\cdots  \\
-\delta ^3  & -\delta ^3 & -\delta ^3 & -\delta ^3 & -\delta ^3 & 0 & \gamma ^3&\cdots\\
\gamma ^3  & \gamma ^3 & \gamma ^3 & \gamma ^3 & \gamma ^3 & \gamma ^3 & 0 &\cdots \\
\vdots&\vdots&\vdots&\vdots&\vdots&\vdots&\vdots&\ddots
\end{array}
\right),\ \text{with}\ \g, \delta\in (0,1) .
\end{equation*}
$Z_{\delta,\gamma}$ is a Hilbert-Schmidt operator which has been studied in 
\cite{bottazzi_varela_DGA} in its 
self-adjoint version. We recall here some 
of 
its 
properties.

Let $\gamma^2= \delta$ and $\delta^2< \gamma$ (for example $\gamma=1/2$ and 
$\delta=1/4$), and denote with $Z_{\delta,\gamma}^{[1]}$ the operator defined 
by the matrix of $Z_{\delta,\gamma}$ with zeros in the first column and row, 
with $c_1(Z_{\delta,\gamma})$ the first column of $Z_{\delta,\gamma}$, and with 
$D_0$ the (uniquely determined) diagonal matrix such that 
every  
$(D_0)_{i,i}$ is chosen to satisfy $c_i(Z_{\delta,\gamma})\perp 
c_1(Z_{\delta,\gamma})$ for all $i\neq 1$. Then 
\begin{equation}\label{defi Zo}
Z_o= \frac{\|Z_{\delta,\gamma}^{[1]}+D_0\|}{\|c_1(Z_{\delta,\gamma})\|} (Z_{\delta,\gamma}-Z_{\delta,\gamma}^{[1]})+Z_{\delta,\gamma}^{[1]}
\end{equation}
is also a Hilbert-Schmidt operator with the property that $D_0$ (constructed as 
mentioned before) is a minimal diagonal for $Z_o$ (see (5.1) in 
\cite{bottazzi_varela_DGA} for a detailed proof of these statements  
and the following comments).
Moreover, it has been proved that $D_0\in \D(\bah)$ is the unique bounded best 
approximant anti-Hermitian diagonal of $Z_o$. $D_0$ has the particular property 
that $\lim_{k\to\infty}(D_0)_{2k,2k}\neq \lim_{k\to\infty}(D_0)_{2k+1,2k+1}$ 
and both limits are not null. Therefore $D_0$ is not compact and we call it an 
oscillant diagonal. We will write with
\begin{equation} \label{defi Z2}
Z_2=Z_o+D_0
\end{equation}
to denote the minimal operator constructed as above.

Then
\begin{equation}
\label{ecuacion que cumple D0 minimal de Zo}
\dist(Z_o,\D(\kah))=\|[Z_o]\|_{\kah/\D(\kah)}
=\|Z_2\|=\|c_1(Z_2)\|=\|c_1(Z_o)\|.
\end{equation}

\begin{teo} \label{teo minimal}
Let $b=\diag\left(\{b_i\}_{i\in \N}\right)\in \D(\kh^h)$ with  $b_i\neq b_j$ for each $i\neq j$. Consider the unitary orbit
$\ob^{\ua}$ defined in \eqref{def: orbita 
ob} and $x=Z b-bZ\in  T _b\ob $, for 
$Z$ a minimal operator in $\kah+\dah$. Then the uniparametric group curve 
$\g(t)= 
e^{tZ}be^{-tZ}$ has minimal length in the class of all curves 
in $\ob^{\ua}$ joining $\g(0)$ and $\g(t)$ for each 
$t\in\left[-\frac{\pi}{2\|Z \|},\frac{\pi}{2\|Z \|}
 \right] $.
\end{teo}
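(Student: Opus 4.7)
The plan is to embed the problem into the classical framework of Durán, Mata and Recht by enlarging the ambient C*-algebra. Set $\mathcal{A}':=\kh+\dbh$ and $\mathcal{B}':=\dbh$, so that $(\mathcal{A}')^{ah}=\kah+\dah$ and $(\mathcal{B}')^{ah}=\dah$. Then the given operator $Z\in\kah+\dah$ is precisely a minimal lift in $(\mathcal{A}')^{ah}$ of the tangent vector $x$ modulo $(\mathcal{B}')^{ah}$, which is exactly the hypothesis needed to invoke Theorem I of \cite{dmr1} (or, since $\dbh\cong\ell^{\infty}(\N)$ is a von Neumann algebra, Theorem II of \cite{dmr1}) applied to the pair $(\mathcal{A}',\mathcal{B}')$.

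First I would verify that $\mathcal{A}'$ is a C*-subalgebra of $\bh$. Selfadjointness and closure under products are immediate; norm-closedness follows because if $K_n+D_n\to A$, continuity of the diagonal projection forces $\D(K_n)+D_n\to \D(A)$, whence $K_n-\D(K_n)\to A-\D(A)\in\kh$ and hence $A\in\kh+\dbh$. The subalgebra $\mathcal{B}'=\dbh$, being $\ell^{\infty}(\N)$ acting diagonally, is a von Neumann subalgebra.

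Next I would match orbits and Finsler metrics. Writing $Z=K+D$ with $K\in\kah$ and $D\in\dah$, Duhamel's formula gives $e^{tZ}-e^{tD}=\int_0^t e^{s(K+D)}\,K\,e^{(t-s)D}\,ds\in\kh$, so $e^{tZ}\in\ukd$ and hence $\g(t)=e^{tZ}be^{-tZ}\in\oo_b^{\ukd}=\ob^{\ua}$ by \eqref{eq: las orbitas son iguales con distintos subgrupos}. An analogous factorization shows that the entire $\mathcal{U}_{\mathcal{A}'}$-orbit of $b$ coincides with $\ob^{\ua}$, with isotropy $\mathcal{U}_{\mathcal{B}'}$ (the diagonal unitaries, since $b$ has simple spectrum). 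For the metric, Remark \ref{rem: orbitas y metrica finlser iguales}, via Proposition 5 of \cite{bottazzi_varela_LAA}, identifies the quotient norm on $(\kah+\dah)/\dah$ with the Finsler norm on $\ob^{\ua}$.

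With these identifications in place, the DMR theorem applied to $(\mathcal{A}',\mathcal{B}')$ and the minimal lift $Z$ yields that $\g(t)=e^{tZ}be^{-tZ}$ is length-minimizing among piecewise-smooth curves in the orbit joining its endpoints for $|t|\leq\pi/(2\|Z\|)$. The length of $\g|_{[0,t]}$ is $t\|Z\|$, because $\g'(s)=Z\g(s)-\g(s)Z$ has constant Finsler norm $\|[Z]\|=\|Z\|$ (by invariance under the group action and by minimality of $Z$), so the upper bound on $t$ is exactly the standard DMR exponential bound. The main obstacle I expect is the orbit-equality step: one has to check carefully that passing from $\ua$ to $\mathcal{U}_{\mathcal{A}'}$ does not genuinely enlarge the orbit, so that the short-curve conclusion transfers back to $\ob^{\ua}$; this is the Duhamel computation indicated above.
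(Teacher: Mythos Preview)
Your argument is correct, but the paper takes a shorter route. Instead of introducing the intermediate C*-algebra $\mathcal{A}'=\kh+\dbh$ and applying the DMR theorem to the pair $(\mathcal{A}',\dbh)$, the paper enlarges all the way to the pair $(\bh,\dbh)$: since $Z\in\kah+\dah\subset\bah$ is minimal modulo $\dah$, Theorem II of \cite{dmr1} yields minimality of $\gamma$ in the \emph{full} unitary orbit $\mathcal{P}_b=\ob^{\uh}$; then, because $\gamma\subset\ob^{\ua}\subset\mathcal{P}_b$ and the two Finsler metrics agree (Remark \ref{rem: orbitas y metrica finlser iguales}), $\gamma$ is a fortiori minimal among curves in the smaller orbit $\ob^{\ua}$. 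This avoids your verifications that $\kh+\dbh$ is norm-closed and that its unitary group has the same orbit as $\ua$ (your Duhamel step is essentially what the paper cites as Theorem 4.2 of \cite{bottazzi_varela_studia}). Your approach has the virtue of being intrinsic to the orbit actually under study and of making transparent that the von Neumann property of $\dbh$ is the engine; the paper's approach trades that for brevity by exploiting the monotonicity of the length-minimization problem under orbit inclusion.
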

\begin{proof} This proof is a direct consequence of mentioned previous results, 
but we include here the citations and reasonings for the sake of clarity. 
	
By Theorem 4.2 in \cite{bottazzi_varela_studia}, $\g(t)\in \ob^{\uk}$ for any $t\in \R$. Using Remark \ref{rem: orbitas y metrica finlser iguales}, we obtain that $\g(t)\in \ob^{\ua}=\ob,  \forall\ t$. Moreover,
$$\|x\|_b=\|Z b-bZ \|_b=\|[Z ]\|=\inf_{\theta\in\R;\ d\in \D(\kah)} \|Z 
+d+i\theta I\|=\|Z \|,$$
where the minimality of $Z $ implies the last equality.

Consider $\PP_b=\{ubu^*:\ u\in\uu(\h)\}$, then by Theorem II in \cite{dmr1}, 
since $Z $ is minimal, the curve $\gamma$ has minimal length over all the 
smooth curves in $\PP_b$ that join $\gamma(0)=b$ and $\gamma(t)$, with 
$\left|t\right|\leq \frac{\pi}{2\left\|Z \right\|}$. Since clearly 
$\ob^{\ua}\subseteq\PP_b$, then for each $t_0\in 
\left[-\frac{\pi}{2\left\|Z\right\|},\frac{\pi}{2\left\|Z\right\|}\right]$
 follows that $\gamma$ is a short curve in $\ob^{\ua}$, that is
$$
\longi\left(\gamma\big|_{[0,t_0]}\right)=\dist(b,\gamma(t_0)),$$
where $\dist(b,\gamma(t_0))$ is the rectifiable distance between $b$ and 
$\gamma(t_0)$ defined in \eqref{def: distancia rectificable}.
\end{proof}

\begin{cor} \label{coro Z2 minimal}
	Let $b=\diag\left(\{b_i\}_{i\in \N}\right)\in \D(\kh^h)$ with  
	$b_i\neq b_j$ for each $i\neq j$. Consider the unitary orbit
	$\ob^{\ua}$ defined in \eqref{def: orbita 
		ob}, $x=Z_ob-bZ_o\in  T _b\ob $, for 
	$Z_o$ defined in \eqref{defi Zo}, with $D_0$ its unique minimizing
	diagonal, and $Z_2=Z_o+D_0$ defined in \eqref{defi Z2}. 
	
	Then the 
	uniparametric group curve $\g(t)= 
	e^{tZ_2}be^{-tZ_2}$ has minimal length in the class of all 
	curves 	in $\ob^{\ua}$ joining $\g(0)$ and $\g(t)$ for each 
	$t\in\left[-\frac{\pi}{2\|Z_2\|},\frac{\pi}{2\|Z_2\|}
	\right] $.
\end{cor}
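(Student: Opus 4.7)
The plan is to observe that Corollary \ref{coro Z2 minimal} is a direct specialization of Theorem \ref{teo minimal} to the concrete operator $Z=Z_2$, so all that has to be done is to verify the two hypotheses of that theorem: namely that $Z_2\in \kah+\dah$ and that $Z_2$ is a minimal operator in the sense needed there.

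First I would check that $Z_2=Z_o+D_0$ belongs to $\kah+\dah$. By construction $Z_o$ is Hilbert-Schmidt (it differs from $Z_{\delta,\gamma}$ only by a rescaling of its first row and column), hence compact; it is also anti-Hermitian because $Z_{\delta,\gamma}$ is, and the rescaling by the real scalar preserves anti-Hermiticity. Therefore $Z_o\in\kah$. The diagonal $D_0$ is anti-Hermitian and bounded (as recorded in the paragraph following \eqref{defi Zo}), so $D_0\in\dah$, and hence $Z_2\in\kah+\dah$ as required.

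Next I would verify minimality. The content of \eqref{ecuacion que cumple D0 minimal de Zo} is precisely that
\[
\|Z_2\|=\|Z_o+D_0\|=\dist(Z_o,\D(\kah))=\inf_{d\in\D(\kah)}\|Z_o+d\|.
\]
Combined with Remark \ref{rem: orbitas y metrica finlser iguales}, which asserts that the infimums defining the quotient norms over $\D(\kah)$, $\bb^{ah}$ and $\dah$ all coincide, this yields
\[
\|Z_2\|=\inf_{D\in\dah}\|Z_2+D\|=\|[Z_2]\|,
\]
which is the statement that $Z_2$ is a minimal lifting of the tangent vector $x=Z_2b-bZ_2=Z_ob-bZ_o$ in the sense of \eqref{eq: def metrica de Finsler}.

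With both hypotheses in place, the conclusion follows by plugging $Z=Z_2$ into Theorem \ref{teo minimal}: the curve $\gamma(t)=e^{tZ_2}be^{-tZ_2}$ is short in $\ob^{\ua}$ on the interval $[-\pi/(2\|Z_2\|),\pi/(2\|Z_2\|)]$. No substantial obstacle is expected; the only delicate point is the correct bookkeeping between the three equivalent quotient-norm pictures of Remark \ref{rem: orbitas y metrica finlser iguales}, which is needed to interpret the minimality of $D_0$ established in \cite{bottazzi_varela_DGA} (where it is proved as a best approximant in $\D(\bah)$ for $Z_o\in\kah$) as minimality of $Z_2$ in the $(\kah+\dah)/\dah$ sense required by Theorem \ref{teo minimal}.
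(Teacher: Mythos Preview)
Your proposal is correct and follows exactly the same approach as the paper: the paper's proof simply states that $Z_2=Z_o+D_0$ satisfies the hypotheses of Theorem \ref{teo minimal} and invokes it. You have merely spelled out in detail why $Z_2\in\kah+\dah$ and why $Z_2$ is minimal (using \eqref{ecuacion que cumple D0 minimal de Zo} and Remark \ref{rem: orbitas y metrica finlser iguales}), which is precisely the verification the paper leaves implicit.
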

\begin{proof}
	If we consider $Z=Z_2=Z_o+D_0$ in the statements of Theorem \ref{teo 
	minimal}, then $Z_2$ satisfies the conditions required and therefore the 
	proof is apparent.
\end{proof}

The previous result will allow us to state that the converse 
of Theorem I in \cite{dmr1} does not 
necessary hold 
when the subalgebra considered (here 
$\dd(\kk+\C)$) is not a von 
Neumann algebra. Let us describe the 
context of that article. Let $\mathcal{A}$ 
be a C$^*$-algebra and $\mathcal{B}$ a 
C$^*$-subalgebra, then a natural Finsler metric as the one in \eqref{eq: def 
metrica de Finsler} is defined for the generalized flag $\mathcal{P}=\uu_{\mathcal{A}}/\uu_{\mathcal{B}}$.
If the element $X\in\mathcal{A}^{ah}$ is minimal for a tangent vector $x\in 
T_p(\uu_{\mathcal{A}}/\uu_{\mathcal{B}})\simeq 
T_1(\uu_{\mathcal{A}})/T_1(\uu_{\mathcal{B}})$ (that is: $x=Xp-pX$ and  
$\|X\|=\inf\{\|Y\|:Y\in\mathcal{A}^{ah} ; Yp-pY=x\}$) then the curve 
$\gamma(t)=L_{e^{tX}} \cdot p $ has minimal length for 
$|t|\leq\frac{\pi}{2\|X\|}$ (where $L$ is a left action on $\mathcal{P}$) with 
the distance defined in \eqref{def: distancia rectificable}.

The following result shows that there might exist some minimal curves $\gamma$ 
in these generalized flags $\mathcal P=\ua/\ub$ that are not of the form 
$\gamma(t)=L_{e^{tZ}}\cdot p$ for $Z\in\mathcal{A}^{ah}$ a minimal lifting of a 
tangent vector $x\in T_1(\uu_{\mathcal{A}})/T_1(\uu_{\mathcal{B}})$.

\begin{rem}\label{rem: curvas cortas en Ob con velocidades fuera de KmasC} 

	Let $Z_2$ be the operator defined in \eqref{defi Z2}. As it was mentioned 
	in Corollary \ref{coro Z2 minimal}, the uniparametric curve $\gamma(t)= 
	e^{tZ_2}be^{-tZ_2}$ has minimal length in the class of all 
	curves in $\ob=\ob^{\ua}$ joining $\g(0)$ and $\g(t)$ for each 
	$t\in\left[-\frac{\pi}{2\|Z_2\|},\frac{\pi}{2\|Z_2\|}
	 \right] $. 
	
Therefore the curve $\gamma$ is included in $\ob$ with initial conditions 
$\gamma(0)=b$, 
$\g'(0)=x=Z_ob-bZ_o$ even for velocity vectors $x\in T_b \ob $ that do not have 
a minimal compact lifting $K_0$ (recall that $Z_o+D_0$ is not compact and $D_0$ is its unique minimizing diagonal). Thus 
$\ua$ is an example of a group  whose 
action on $\ob$ has short curves that might not be described using minimal 
vectors 
$Y\in(\aaa)^{ah}$. This 
is not new (for instance, see Remark 4.7 in \cite{bottazzi_varela_studia}), but 
in the present case $\aaa$ and $\bb$, whose anti-Hermitian elements are the 
Lie-algebras of $\ua$ and $\ub$ respectively, are unital $C^*$-algebras.

We will develop some details of this situation in the next section.
\end{rem}

\section{Neighborhoods of short curves defined by minimal vectors in 
$\ua/\ub$}

In this section we will consider the problem of the existence of a neighborhood 
around $b\in \diag(\kh)^h$ with $b_{i,i}\neq b_{j,j}$ for $i\neq j$ whose 
elements can be joined with $b$ with a short curve of the form
$$
\gamma(t)=e^{tZ}be^{-tZ}
$$
for some minimal anti-Hermitian element $Z\in(\aaa)^{ah}$ and $t$ in some 
interval.

Recall here $Z_2=Z_o+D_0$ defined in \eqref{defi Z2} where $Z_o\in\kah$ is the 
Hilbert-Schmidt operator defined in \eqref{defi Zo} and $D_0$ is its unique 
minimizing diagonal with the property that $D_0$ has subsequences that converge 
to two different (not null) limits as described in the previous section.
Moreover, $Z_2$ satisfies, 
\begin{enumerate}
	\item $\|Z_2\|=\|c_{1}(Z_2)\|$, 
	\item $c_{1}(Z_2)_{1}=(Z_2)_{1,1} = 0$ and 	
	\item $c_{1}(Z_2)_{j}=(Z_2)_{j,1} \neq 0$ for all $j\neq 1$.
\end{enumerate}

\begin{lem}\label{lem: sobre geodesica con igual veloc inicial} 
	Let $\g(t)=e^{t Z } b e^{-t 
Z}\subset \ob$ with $Z \in \bah$ a minimal operator with unique minimizing 
diagonal, and consider a curve $\delta(t)=e^{t V} b e^{-tV}$, with $V \in \bah$ 
another minimal operator such that $\delta'(0)=Vb-bV=\gamma'(0)=Z
b-bZ$. Then it must be $Z=V$.
\end{lem}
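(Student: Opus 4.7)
The strategy is to extract from the equality of initial velocities a relation of the form $V = Z + D$ with $D$ diagonal anti-Hermitian, and then invoke the uniqueness hypothesis on the minimizing diagonal of $Z$ to force $D = 0$.

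First I would rewrite the identity $Vb - bV = Zb - bZ$ as $(Z-V)b = b(Z-V)$, so that $Z - V$ commutes with $b$. Because $b = \diag(\{b_i\}_{i\in\N})$ has spectral multiplicity one (all diagonal entries $b_i$ are pairwise distinct), its commutant inside $\bh$ consists exactly of the diagonal operators. Therefore $D := Z - V$ lies in $\dah$, so that $V = Z - D$, and in particular $Z$ and $V$ share the same off-diagonal part; call it $Z_0$.

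Next, since both $Z$ and $V$ are minimal liftings of $x = Zb - bZ \in T_b \ob$, one has
\[
\|Z\| \;=\; \|V\| \;=\; \|x\|_b \;=\; \inf_{d \in \dah} \|Z_0 + d\|,
\]
where the last equality combines the definition \eqref{eq: def metrica de Finsler} with Remark \ref{rem: orbitas y metrica finlser iguales}, which asserts that the infimum computing the quotient norm can equivalently be taken over $\dah$, over $\D(\kah) + i\R I$, or over $\D(\kah)$. Consequently both $\diag(Z)$ and $\diag(V) = \diag(Z) - D$ are elements of $\dah$ that realize this infimum, i.e.\ both are minimizing diagonals for the common off-diagonal part $Z_0$.

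Finally, the hypothesis that $Z$ admits a \emph{unique} minimizing diagonal forces $\diag(Z) = \diag(V)$, hence $D = 0$ and $V = Z$. The only point worth handling with care is the identification of the infimum defining $\|x\|_b$ with the infimum over the full bounded anti-Hermitian diagonal subalgebra $\dah$ (rather than merely over $\bb^{ah}$), since $V - Z$ need not \emph{a priori} belong to $\bb^{ah}$; but this identification is precisely the content of Remark \ref{rem: orbitas y metrica finlser iguales}, so no genuine obstacle arises.
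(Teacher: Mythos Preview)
Your argument is correct and follows essentially the same route as the paper: deduce from $\delta'(0)=\gamma'(0)$ that $Z-V$ commutes with $b$, hence is diagonal, and then use the uniqueness of the minimizing diagonal of $Z$ together with the minimality of $V$ to conclude $Z=V$. Your explicit remark that one must identify the infimum over $\bb^{ah}$ with the infimum over all of $\dah$ (via Remark~\ref{rem: orbitas y metrica finlser iguales}) is a welcome clarification that the paper's proof leaves implicit.
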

\begin{proof} 
	Since $\delta'(0)=\g'(0)$, 
	then
	$$
	\delta'(0)=Vb-bV=\g'(0)=Z b-bZ
	$$
	and therefore $V-Z$ commutes with $b$. Then 
	$V-Z\in\mathcal{D}(\bah)$ which implies that $Z$ and $V$ must be 
	equal outside their diagonals. Then, since $\diag(Z)$ is the only 
	minimizing diagonal for $Z$ and $V$ is also a minimal operator then 
	$\diag(V)=\diag(Z)$ which implies that $V=Z$.
\end{proof}
\begin{rem}
	Observe that if we apply the 
	previous lemma 
	to the case where $Z=Z_2$ defined in \eqref{defi Z2} and $\delta$ and $V$ 
	satisfy 
	the assumptions of the lemma, then in particular $V\notin 
	(\kk+\C)^{ah}$. This is a direct consequence of the fact that $Z_2$ has two 
	different non zero diagonal limits, something that $V\in (\kk+\C)$ cannot 
	satisfy.
\end{rem}

\begin{lem}\label{lem: gamma y delta se cruzan 
entonces hay una diagonal...} 
	
	Let $Z=K_Z+\diag(Z)$ with $K_Z\in\kah$, 
	$\diag(Z)\in \dah$ 
	be a minimal operator and $\gamma: 
	[0,\frac{\pi}{2\|Z\|}] \to \ob$ the short 
	curve defined as $\gamma(t)=e^{tZ} b e^{-tZ}$ 
	(see  Theorem \ref{teo minimal}) 
	and let 
	$\delta:\left[0, \frac{\pi}{2\|V\|}\right]\to \ob$  be another short 
	curve defined by 
	$\delta(s)=e^{sV} b e^{-sV}$ for 
	$V=K_V+\diag(V)$ another minimal operator with 
	$K_V\in \kah$ and 
	$\diag(V)\in\dah$. 
	Moreover,  suppose that there exists $t_1
	\in \left(0,\frac{\log(2)/8}{\|Z\|}\right]$ and $s_1\in 
	\left(0, \frac{\pi}{2\|V\|} \right]$ such that 	
	$\gamma(t_1)=\delta(s_1)$.
	
	Then 
	$$
	e^{t_1Z}=e^{s_1V} 
	e^{-\diag(s_1V)+\diag(t_1Z)}\ \text{ and } 
	\|s_1V\|=\|t_1Z\|.
	$$ 
\end{lem}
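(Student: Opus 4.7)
The plan is to combine length minimality with a Baker--Campbell--Hausdorff argument to identify a specific diagonal unitary.

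\textbf{Norm equality and reduction.} By Theorem \ref{teo minimal}, both $\gamma|_{[0,t_1]}$ and $\delta|_{[0,s_1]}$ are short curves in $\ob$ sharing endpoints $b$ and $c := \gamma(t_1) = \delta(s_1)$; as one-parameter subgroup curves generated by the minimal operators $Z$ and $V$, their lengths are $t_1\|Z\|$ and $s_1\|V\|$ respectively. Both equal $\dist(b,c)$, giving $\|t_1 Z\| = \|s_1 V\|$, which is the second stated conclusion. Next, $\gamma(t_1) = \delta(s_1)$ forces $U := e^{-s_1V}e^{t_1Z}$ to commute with $b$; since $b$ has spectral multiplicity one, $U$ must be a diagonal unitary, and $e^{t_1Z} = e^{s_1V} U$. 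So the product formula reduces to showing $U = e^{-\diag(s_1V) + \diag(t_1Z)}$.

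\textbf{Logarithm setup.} The smallness bound $t_1\|Z\| \leq \log(2)/8$, combined with the norm equality, yields $\|{-s_1V}\| + \|t_1Z\| \leq \log(2)/4 < \log 2$, strictly inside the convergence radius of the Baker--Campbell--Hausdorff series for $\log(e^{-s_1V} e^{t_1Z})$. The principal logarithm $W := \log U$ is therefore well-defined; since $U$ is diagonal and close to $I$, $W$ is a diagonal anti-Hermitian operator. The claim then becomes $W = \diag(t_1Z) - \diag(s_1V)$.

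\textbf{Identification of $W$.} My proposed approach is to expand $W$ via BCH as $W = -s_1V + t_1Z + \tfrac{1}{2}[-s_1V, t_1Z] + \cdots$, writing $Z = K_Z + \diag(Z)$ and $V = K_V + \diag(V)$. Since $W$ is diagonal, its off-diagonal part vanishes; the leading off-diagonal contribution of the series is $t_1 K_Z - s_1 K_V$, which forces $s_1 K_V = t_1 K_Z$ at leading order. Once this proportionality holds $[K_V, K_Z] = 0$, so the iterated commutators in the BCH expansion collapse, leaving the diagonal contribution $t_1 \diag(Z) - s_1 \diag(V)$ and yielding $W = \diag(t_1Z) - \diag(s_1V)$. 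The main obstacle is making this cancellation rigorous in the infinite-dimensional operator setting, including a careful induction on higher-order BCH terms. An alternative route would be to exploit the factorization $e^{t_1Z} = e^{K_u} e^{\diag(t_1Z)}$ (valid locally by uniqueness of the principal logarithm, with $K_u \in \kah$) together with the analogous factorization for $e^{s_1V}$, then argue that the two compact-perturbation unitaries $e^{K_u}$ and $e^{K_v}$ must coincide since both conjugate $b$ to $c$ and the smallness bound rules out nontrivial diagonal holonomy in $\uk \cap \ub$.
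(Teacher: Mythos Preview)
Your reduction to a diagonal unitary and the norm equality $\|t_1Z\|=\|s_1V\|$ are correct and match the paper. The gap is in the ``Identification of $W$'' step.

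First, even granting $s_1K_V=t_1K_Z$, the iterated commutators do \emph{not} collapse. Write $K:=t_1K_Z=s_1K_V$; then
\[
[-s_1V,\,t_1Z]=\bigl[-K-s_1\diag(V),\,K+t_1\diag(Z)\bigr]=[K,\,s_1\diag(V)-t_1\diag(Z)],
\]
which is generically nonzero (a compact operator commuting with a bounded diagonal need not vanish). Higher BCH terms then involve further nested brackets of this with $K$ and the diagonals, and those can contribute nonzero diagonal entries. So the series does not reduce to $t_1\diag(Z)-s_1\diag(V)$ by this mechanism. Second, your ``leading order forces $s_1K_V=t_1K_Z$'' step is, as you note, only heuristic: the off-diagonal parts of the higher BCH terms can cancel against $t_1K_Z-s_1K_V$, so vanishing of the total off-diagonal part of $W$ gives no direct information at first order. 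Your alternative route has the same issue: the intersection $\uk\cap\D(\uh)$ contains every $e^{d}$ with $d$ a small compact anti-Hermitian diagonal, so smallness alone does not rule out nontrivial diagonal holonomy, and you would only obtain $D=t_1\diag(Z)-s_1\diag(V)+d$ for some unknown compact diagonal $d$.

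The paper avoids this entirely by invoking a structural factorization from \cite{bottazzi_varela_studia} (Proposition~3.11 and Corollary~3.12 there): for exponents sufficiently close to zero one has
\[
e^{-s_1V}e^{t_1Z}=e^{K-\diag(s_1V)+\diag(t_1Z)}
\]
for some $K\in\kah$ with $\diag(K)=0$. Since both $D$ and $K-\diag(s_1V)+\diag(t_1Z)$ are anti-Hermitian logarithms of the same unitary with norm less than $\pi$, they coincide; comparing off-diagonal parts forces $K=0$, and the conclusion follows. The point is that the exact identification of the diagonal part of $\log\bigl(e^{-s_1V}e^{t_1Z}\bigr)$ is not a formal BCH consequence but requires this external input.
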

\begin{proof} 
Note that $t_1$ satisfies $ \|t_1 Z\|<(\log 2)/8 $, and then $t_1Z$ is 
sufficiently close to 
	zero in the sense of Definition 2.1 of \cite{bottazzi_varela_studia}.
	
	Now consider the equality $\gamma(t_1)=\delta(s_1)$ 
	$$
	e^{s_1V} b e^{-s_1V}=e^{t_1Z} b e^{-t_1Z}.
	$$
	Then
	$$
	b e^{-s_1V}e^{t_1Z}=e^{-s_1V}e^{t_1Z} b 
	$$
	which implies that $b$ commutes with 
	$e^{-s_1V}e^{t_1Z}$.
	Therefore $e^{-s_1V}e^{t_1Z}$ is diagonal and 
	unitary. Then there exists $D\in\dbh^{ah}$ 
	such that $e^{-s_1V}e^{t_1Z}=e^D$. 
	
	Observe that since $Z$ is a minimal operator 
	the length of $\gamma$ restricted 
	to $[0,t_1]$ is $\|t_1Z\|$ and if $\delta$ is 
	a short curve then the length 
	of $\delta$ must coincide with $\|t_1Z\|$ (see Theorem 4.1 in \cite{dmr1}). 
	Since the length of $\delta$ 
	restricted to $[0,s_1]$ equals $\|s_1 V\|$
    because $V$ is a minimal 
	operator, then $\|t_1Z\|=\|s_1 V\|$.
	Also $t_1 Z$ is sufficiently close to zero 
	(thus 
	$s_1V$), so we can apply Proposition 3.11 and 
	Corollary 3.12 of 
	\cite{bottazzi_varela_studia} to obtain 
	\begin{equation}
	\label{ec el producto de e-V eZ es e a las 
	diags}
	\begin{split}
	e^D&=e^{-s_1V}e^{t_1Z}\\
	&=  
	e^{-s_1V+\diag(s_1V)-\diag(s_1V)}e^{t_1Z-\diag(t_1Z)+\diag(t_1Z)}\\
	&=e^{K -\diag(s_1V)+\diag(t_1Z)}
	\end{split}
	\end{equation}
	for $K\in\kah$ with $\diag(K)=0$.
	Then, since $\|t_1Z\|=\|s_1V\|<(\log2)/8$, 
	then $\|D\|=\|\log(e^{-s_1V}e^{t_1Z})\|\leq 
	-1/2 \log\left(2-e^{2\|t_1 
	Z\|+2\|s_1V\|}\right)
	<\pi$
	(see some of the Baker-Campbell-Hausdorff 
	series bounds in  
	\cite{beltita_Smooth_homogeneous_structures} 
	or 
	\cite{varadarajan_Lie_groups}). This implies 
	that 
	$ D= {K-\diag(s_1V)+\diag(t_1Z)}$ because 
	$e^D=e^{K-\diag(s_1V)+\diag(t_1Z)}$ 
	and both anti-Hermitian exponents have norm 
	less than $\pi$ (see for example 
	Corollary 4.2 iii) of 
	\cite{chiumiento_normal_logarithms}). But, 
	since $\diag(K)=0$ and $D\in\dah$, then 
	\begin{equation}
	\label{ec D es la suma de las diags de V y de 
	Z}
	D=-\diag(s_1V)+\diag(t_1Z) .
	\end{equation}
	and $K=0$.
	%
\end{proof}

\begin{teo}\label{teo: hopf rinow local} 
	(Local Hopf-Rinow theorem)
	There exists $\mathcal{W}_b\subset\ob=\ob^{\ua}$ a neighborhood (with 
	the 
	distance defined in \eqref{def: distancia rectificable}) of  $b\in 
	\D(\khh)$ with $b_{i,i}\neq b_{j,j}$ for $i\neq 
	j$,
	such that for 
	every $\rho\in \mathcal{W}_b$ there exists a short curve $\gamma$ in $\ob$ 
	that 
	joins $b$ with $\rho$,  and 
	$\gamma:[0,1]\to\mathcal{W}_b\subset\ob$, 
		$$
		\gamma(t)=e^{t(K_\rho+D_\rho)}be^{-t(K_\rho+D_\rho)},
		$$ 
	with $K_\rho\in\kah$, $D_\rho\in\dah$, $\|K_\rho\|, \|D_\rho\|, 
	\|K_\rho+D_\rho\| < 
	\frac{\log(2)}{4}$, and $(K_\rho+D_\rho)$ a minimal operator in $\kah+\dah$.

\end{teo}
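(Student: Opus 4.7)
The plan is to recast the problem within the classical framework of \cite{dmr2} by enlarging the ambient C*-algebra. Set $\mathcal{A}':=\kh+\dbh$ and $\mathcal{B}':=\dbh$. A direct check (paralleling Section~2) shows that $\mathcal{A}'$ is a C*-subalgebra of $\bh$, that $\mathcal{B}'$ is a \emph{von Neumann} subalgebra of $\mathcal{A}'$, and that $\mathcal{U}_{\mathcal{A}'}=\ukd$ and $\mathcal{U}_{\mathcal{B}'}=\udiag$. Since $b$ has simple spectrum, its $\ukd$-isotropy is exactly $\udiag$; hence by Remark \ref{rem: orbitas y metrica finlser iguales} we identify $\ob\simeq\ukd/\udiag=\mathcal{U}_{\mathcal{A}'}/\mathcal{U}_{\mathcal{B}'}$, with matching Finsler structures (the infimum in \eqref{eq: def metrica de Finsler} is now taken over $\dah$).

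With this identification, the local Hopf--Rinow theorem of \cite{dmr2}---whose hypotheses require precisely that the subalgebra in the denominator be von Neumann---applies and produces an open neighborhood $\widetilde{\mathcal{W}}_b\subset\ob$ of $b$ (for the rectifiable distance \eqref{def: distancia rectificable}) such that every $\rho\in\widetilde{\mathcal{W}}_b$ is reached from $b$ by a short curve $\gamma(t)=e^{tZ_\rho}be^{-tZ_\rho}$, $t\in[0,1]$, where $Z_\rho\in(\mathcal{A}')^{ah}=\kah+\dah$ is a minimal lift of the initial velocity. Theorem \ref{teo minimal} is consistent with and reinforces this: as long as $\|Z_\rho\|<\pi/2$, such a $\gamma$ is indeed short on $[0,1]$.

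The remaining step is quantitative. I would decompose $Z_\rho=K_\rho+D_\rho$ with $K_\rho:=Z_\rho-\diag(Z_\rho)\in\kah$ (off-diagonal, and still compact) and $D_\rho:=\diag(Z_\rho)\in\dah$. Using the elementary bound $\|\diag(Z)\|\leq\|Z\|$, one gets $\|D_\rho\|\leq\|Z_\rho\|$ and $\|K_\rho\|\leq 2\|Z_\rho\|$. Shrinking $\widetilde{\mathcal{W}}_b$ to the open rectifiable ball $\mathcal{W}_b$ of radius $\log(2)/8$ around $b$ forces $\|Z_\rho\|=\dist(b,\rho)<\log(2)/8$, so that $\|K_\rho\|,\|D_\rho\|,\|K_\rho+D_\rho\|<\log(2)/4$; the containment $\gamma([0,1])\subset\mathcal{W}_b$ is automatic, since the length of $\gamma|_{[0,t]}$ equals $t\|Z_\rho\|$.

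The main delicacy, rather than in the final norm bounds, lies in the opening identification: verifying rigorously that $\ukd$ is the unitary group of $\mathcal{A}'$ carrying the Banach--Lie structure that \cite{dmr2} requires, that the projection $\mathcal{U}_{\mathcal{A}'}\to\ukd/\udiag$ is a principal $\mathcal{U}_{\mathcal{B}'}$-bundle, and that the minimal lift produced there is indeed an element of $\kah+\dah$ (and not only of some larger completion). These points are essentially contained in Section~2 and in the cited results of \cite{bottazzi_varela_studia} and \cite{bottazzi_varela_LAA}, but stitching them together is where the bulk of the bookkeeping concentrates.
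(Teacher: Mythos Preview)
Your route differs genuinely from the paper's. The paper does not apply \cite{dmr2} to the pair $(\kh+\dbh,\dbh)$; instead it embeds $\ob$ into the strictly larger orbit $\ob^{\uh}\simeq\uh/\D(\uh)$, which is literally Example~1 of \cite{dmr2}, and takes the local Hopf--Rinow theorem there as a known input. For $\rho=e^Kbe^{-K}\in\ob$ near $b$ this yields a minimal $Z\in\bah$ with $e^Zbe^{-Z}=\rho$; then $e^{-K}e^Z$ commutes with $b$, so $e^Z=e^Ke^D\in\ukd$, and Lemma~3.14 of \cite{bottazzi_varela_studia} (local surjectivity of $\exp:\kah+\dah\to\ukd$ near the identity) forces $Z=K'+D'$ with the stated norm bounds once the neighborhood is shrunk so that $\|e^Z-1\|<\varepsilon_0$. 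The equality of Finsler metrics (Remark~\ref{rem: orbitas y metrica finlser iguales}) then guarantees the curve, short in $\ob^{\uh}$, is short in $\ob$ as well.

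Your approach trades this ``embed into a known example and descend'' strategy for ``enlarge the algebra just enough and cite \cite{dmr2} directly''; your quantitative tail (the $\diag$ contraction and the radius $\log(2)/8$) is in fact tidier than the paper's passage through the auxiliary $\varepsilon_0$. The concern is exactly the one you flag at the end: your assertion that the hypotheses of the local Hopf--Rinow in \cite{dmr2} ``require precisely that $\mathcal{B}$ be von Neumann'' is stronger than what the present paper itself reports (the introduction speaks of ``additional hypothesis on the unitary groups involved''), and the structural fact that $\ukd$ is a Banach--Lie group modeled on $\kah+\dah$---which you need for \cite{dmr2} to apply to your pair $(\mathcal{A}',\mathcal{B}')$---is precisely the content of that same Lemma~3.14. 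So your route does not actually sidestep that lemma; it relocates it from the explicit descent step into the unwritten verification that \cite{dmr2} applies at all. The paper's choice buys a proof that rests only on an already worked-out instance of \cite{dmr2}, at the cost of the extra comparison of metrics between $\ob$ and $\ob^{\uh}$.
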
 
\begin{proof}
	
	If we consider the isotropy compact generalized flag manifold 	
	$\mathcal{P}=\mathcal{U}(\h)/\D {\left(\mathcal{U}(\h)\right)}$ then for 
	$\rho_0\in\mathcal{P}$ there exists a neighborhood 
	$$
	\mathcal{V}_{\rho_0}=\{L_u\rho_0 : u=e^X, \text{for } X\in\bah, 
	\|X\|<\pi/2\}
	$$ 
	where a local Hopf-Rinow theorem holds (see Theorem II-1 and 
	Example 1 of \cite{dmr2}). That is, for every $\rho\in\mathcal{V}_{\rho_0}$ 
	there exist a minimal operator $X\in\bah$ with $\|X\| {<}  
	\pi/2$ and a minimal uniparametric group curve $\gamma:[0,1]\to 
	\mathcal{P}$, 
	$\gamma(t)=L_{e^{tX}} \rho_{0}$ joining $\gamma(0)= \rho_{0}$ and 
	$\gamma(1)= \rho$. 
	
	The generalized flag manifold 
	$\mathcal{P}=\mathcal{U}(\h)/\D{\left(\mathcal{U}(\h)\right)}$ can be 
	identified with the unitary orbit of $b\in\D(\kh^h)$ with $b_{i,i}\neq 
	b_{j,j}$
		as well as its tangent spaces as we have done with $\ob$ in Remark 
	\ref{rem: espacio tangente Ob}:
	$$
	T_c \ \mathcal{O}_b^{\uh }\simeq 
	T_1 \ \uh/T_1 \ \D \left(\uh
	\right)=
	\bh^{ah}/\dbh^{ah}
	$$

	Since we are using the adjoint action $L$, then $L_{u}b=ubu^*$ in this 
	context. And if we consider $\rho_0=b$ we can conclude 
	that for any $\rho=e^Kbe^{-K}\in \left(\ob\cap \mathcal{V}_b\right)$ 
	with $K\in\kah$ 
	(see \eqref{eq: las 
	orbitas son iguales con distintos subgrupos}), there 
	exists a minimal operator $Z\in \bah$ with $\|Z\| < 
	\pi/2$, such that 
	\begin{equation}\label{eq: gama curva minimal en Ukd}
	\gamma:[0,1]\to \mathcal{O}_b^{\uh} ,  \gamma(t)= e^{tZ}b e^{-tZ}, \text{ 
	with } \gamma(0)=\rho_0= b \text{ and }  \gamma(1)= \rho= e^Kbe^{-K}= e^Z 
	be^{-Z}.
	\end{equation}
	Note that in this case $e^Z$ cannot be any element of $\uh$ 
	because 
	$e^Kbe^{-K}= e^Z be^{-Z}$ 
	implies that $e^Z=e^K e^D$ for $D\in\dah$, and therefore $e^Z\in \ukd$. 
Moreover, we will show that choosing a smaller neighborhood $Z$ can be written 
as $Z=K'+D'$, with 
$K'\in\kah$, $D'\in\dah$.
In order to obtain this last assertion recall from  Lemma 3.14 
of \cite{bottazzi_varela_studia} that there exists 
$\varepsilon_0>0$ such that if $u\in\ukd$ satisfies 
$\|u-1\|<\varepsilon_0$, then there exist $K'\in\kah$ and $D'\in(\dbh)^{ah}$ 
with $u=e^{K'+ D'}$ for $\|K'\|, \|D'\|, \|K'+D'\|<\frac{\log2}4$ (see 
Definition 2.1 and the proof of Lemma 3.14 of \cite{bottazzi_varela_studia}). 

Then define the neighborhood of $b$ in $\ob$:
\begin{equation*}
\begin{split}
\mathcal{W}_{b}&=\{u b u^*: 
u=e^{Z}\in \ukd, Z\in\bah, \|Z\|< \log(1+\varepsilon_0) \}
\end{split}
\end{equation*}
with $\varepsilon_0$ from Lemma 3.14 of \cite{bottazzi_varela_studia}.
 Note that $\mathcal{W}_{b}
\subsetneq (\mathcal{V}_b\cap \ob) 
\subsetneq 
\left(\mathcal{V}_b\cap  \mathcal{O}_b^{\uh}\right)$.
It is apparent that if $u=e^{Z}\in \ukd$, with $Z\in\bah, \|Z\|< 
\log(1+\varepsilon_0)$, then $\|e^Z-1\|\leq e^{\|Z\|}-1<\varepsilon_0$.
Applying the mentioned lemma, this implies that in this case there exist 
$K'\in\kah$ and $D'\in\dbh^{ah}$ 
with $u=e^{K'+ D'}$ for $\|K'\|, \|D'\|, \|K'+D'\|<\frac{\log2}4$. 
Following the discussion after \eqref{eq: gama curva minimal en Ukd}, if 
$\gamma:[0,1]\to \ob^{\uh}$ is the short curve $\gamma(t)=e^{t 
Z}be^{-tZ}\subset \mathcal{V}_b$ for $Z\in\bah$ a minimal operator such that 
$\gamma(1)=e^K 
be^{-K}\in \ob$ (for $K\in\kah$), then it must be $e^Z=e^K e^D\in\ukd$. 
Moreover, since 
$\|e^Z-1\|<\varepsilon_0$, there exist $K'\in\kah$ and $D'\in\dbh^{ah}$ 
satisfying 
$$
e^Z=e^{K'+ D'}  \text{, for } \|K'+D'\|<\frac{\log2}4 \Longrightarrow Z=K'+D'
$$ 
because $Z$ and $K'+D'$ have norm smaller than $\pi$.
Then the entire curve $\gamma:[0,1]\to\ob^{\uh}$ 
 is included 
 in 
 $\ob=\ob^{\ua}$. In this case, being $Z=K'+D'$, the distance from $\rho_0=b$ 
 to $\rho=e^Z 
 be^{-Z}$ is the same either if we consider the Finsler metrics in $\ob$ or in 
 $\ob^{\uh}$ (see \eqref{eq: igualdades entre infimos para calculo normas 
 cociente}). Then $\gamma(t)=e^{tZ} be^{-tZ}=e^{t(K'+D')}be^{-t(K'+D')}$, 
 $\gamma:[0,1]\to \ob$
 defines a short curve between $b$ and 
 $\rho=e^Kbe^{-K}=e^{(K'+D')}be^{-(K'+D')}$, with $K'+D'=Z$ a minimal operator 
 of $\kah+\dah$. The statement of the theorem follows after substituting 
 $K'=K_\rho$ and $D'=D_\rho$.

The element 
	$\rho=e^{K}be^{-K}\in\mathcal{W}_b$ was chosen arbitrarily,  
	so we have proved that $\mathcal{W}_b$ is a geodesic neighborhood of $b$ in 
	$\ob$.
\end{proof}
\begin{rem}
 Observe  that the unitary $e^{K_\rho+D_\rho}\in \ukd$ mentioned in the 
 previous 
 theorem might not belong to $\kk+\C$, but 
 $\gamma(t)=e^{t(K_\rho+D_\rho)}be^{-t(K_\rho+D_\rho)}\in \ob$ for every 
 $t$ (see Remark \ref{rem: orbitas y metrica finlser iguales}).
\end{rem}
\begin{rem}
	Let $c=e^{K_0}be^{-K_0}\in\ob$, with $K_0\in\kah$. The action 
	$L_u(c)=ucu^*$, for $u\in\uu_{\kk+\C}$ is invariant for the distance 
	defined 
	in $\ob$ and therefore the previous result also holds in this 
	case. 
	That is, there exists a geodesic neighborhood $\mathcal{W}_c$ of $c$ such 
	that every $\rho\in\mathcal{W}_c$ is joined with $c$ by short curves 
	included in $\ob$ of the form 
	$L_{e^{K_0}}\circ\gamma$ (for $\gamma$ the curve described in Theorem 
	\ref{teo: hopf rinow local}).
\end{rem}

Here we recall some results and the notation used in \cite{dmr1} and state its 
translation to the particular example we are studying. In the work mentioned, 
the minimality of a curve $\gamma:[0,\frac{\pi}{2\|Z\|}]\to \ob$,  $\gamma(t)=e^{tZ}be^{-tZ}$, with $Z$ be a minimal lifting of $x\in T_b\ob$, was proved using a unitary reflection $r_0$ in a Hilbert space with certain properties (see Definition 2.4 in \cite{dmr1}), a 
representation of 
$\mathcal{A}$ in $B(\h)$ with particular properties (in our case the identity 
representation verifies them) and a map $F:\ob\to \text{Gr}(\h)$ ($\text{Gr}(\h)$ is the Grassmann manifold of $\h$) defined by 
$F(u b u^*)=ur_0u^*$.
The unitary reflection $r_0$ used there is

\begin{equation}\label{defi ro general}
r_0(x)=\left\{
 		\begin{array}{rcl} 
 		x , \text{ if } x\in S_b\\
 		-x, \text{ if } x\in S_b^\perp,
 		\end{array} \right. 
\end{equation}

 where $S_b$ is the closure of $\Omega=\{x\in \h: x= U \xi, \text{ 
 			for } U \text{ a diagonal in } 
 			\ua  \}$ and $\xi\in\h$ 
 			certain 
 			vector satisfying Definition 
 			4.1 of \cite{dmr1}.
 		
Now, if we consider the particular case in 
which $Z$ is a minimal operator such 
that $\|Z\|=\|c_{j_0}(Z)\|$, $c_{j_0}(Z)_{j_0}=Z_{j_0,j_0} = 0$ and 
$c_{j_0}(Z)_{j}=Z_{j,j_0} \neq 0$ for all $j\neq j_0$ 
(see the example of \eqref{defi Z2} and Lemma \ref{lem realiza norma en columna 
entonces col ortog}) we can 
be much more specific about $r_0$ and 
$\xi$.

After the corresponding translation to 
this case 
\begin{equation}
\label{def xi eta r0}
\xi=i\, e_{j_0}, \ S_b=\text{gen}\{\xi\}\ \text{ and }
 r_0(x)=\left\{
 \begin{array}{rcl} 
 x &,& \text{ if } x\in \text{gen}\{\xi\}=\text{gen}\{e_{j_0}\}\\
 -x&,& \text{ if } x\in  \text{gen}\{\xi\}^\perp =\text{gen}\{e_{j_0}\}^\perp
 \end{array} \right. .
\end{equation}

Moreover, $\xi$ fulfills  Definition 4.1 
in \cite{dmr1}, since 
$Z^2\xi=-\|Z\|^2\xi$, $r_0(\xi)=\xi$ 
and $r_0(Z\xi)=-Z\xi$. Therefore, 
$\gamma(t)=e^{tZ}be^{-tZ}$ minimizes 
length between the points $\g(0)=b$ 
and $\g(t)$ if $0\leq t\leq 
\frac{\pi}{2\|Z\|}$.


In this context, the map $F_\xi:\ob\to\mathscr{S}\subset\h$, $F_\xi(ubu^*)=ur_0u^*(\xi)$, where $\mathscr{S}$ is the unit sphere of $\h$, reduces length. That is,
if $\delta:[0,t_0]\to\ob$  and $v:[0,t_0]\to \mathscr{S}$, $v(t)=F_\xi(\delta(t))$
then (see Corollary 3.4 in \cite{dmr1})
$$
\ell(v)\leq L(\delta)
$$
with $L$ defined in \eqref{def: longitud rectificable} and $\ell$ the length in $\mathscr{S}$.

The following result is an application of Theorem 4.1 and Lemma 4.2 in 
\cite{dmr1} to our context.

\begin{prop} \label{coro coco curvas minimales en H}
 Let $Z$ and $V$ 
be minimal operators of 
$\left(\kh+\dd(\h)\right)^{ah}$ and 
consider the following curves in $\ob$ 
(defined in \eqref{def: orbita ob})
$$
\g(t)=e^{tZ} b e^{-tZ}\ \text{and}\ 
\delta(t)=e^{tV} b e^{-tV},\ t>0.
$$
Suppose additionally that there exists $0\leq t_0\leq\min\left\lbrace  \frac{\pi}{2\|Z_2\|}; \frac{\pi}{2\|V\|}\right\rbrace $ such that $\g(t_0)=\delta(t_0)$. 

Then, following the previous notation, 
\be
\item $w(t)=F_\xi(\g(t))$ and 
$v(t)=F_\xi(\delta(t))$ both 
are geodesics in the sphere 
$\mathscr{S}\subset \h$ and 
$$\ell(w)=L(\g)=L(\delta)=\ell(v).$$
\item $\g $ and $\delta $ minimize length between the points $b$ and $\g(t)$ and $\delta(t)$ respectively, if $0\leq t\leq t_0$.
\item $v(t)=e^{tV} r_0 e^{-tV}(\xi)=e^{tZ} r_0 
e^{-tZ}(\xi)=w(t)$, for $0\leq t\leq t_0$ and $r_0$ the unitary reflection 
defined in \eqref{defi ro general}.
\ee 

\begin{proof}
As it was mentioned before, items (1) and (2) are 
a direct consequence of Theorem 
4.1 and Lemma 4.2 in \cite{dmr1}. If $\g(t_0)=\delta(t_0)$, $w(t_0)$ and 
$v(t_0)$ match. Since geodesics for fixed ending points in the unit sphere 
$\mathscr{S}$ are 
unique (maximum circles), then $w(t)=v(t)$, for all $0\leq t\leq t_0$.
\end{proof}
\end{prop}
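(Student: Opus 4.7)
The plan is to reduce items (1) and (2) to the general results cited in \cite{dmr1}, and then obtain item (3) from the uniqueness of minimizing geodesics on a sphere.

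For item (2), I would apply Theorem \ref{teo minimal} separately to $Z$ and $V$. Since each is a minimal operator in $\kah+\dah$ and $t_0$ lies in the admissible range, the restricted curves $\gamma|_{[0,t_0]}$ and $\delta|_{[0,t_0]}$ are each length--minimizing in $\ob$ with $L(\gamma|_{[0,t_0]})=t_0\|Z\|$ and $L(\delta|_{[0,t_0]})=t_0\|V\|$. The hypothesis $\gamma(t_0)=\delta(t_0)$ then forces $t_0\|Z\|=t_0\|V\|=\dist(b,\gamma(t_0))$, so in particular $L(\gamma|_{[0,t_0]})=L(\delta|_{[0,t_0]})$.

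For item (1), the strategy is to invoke Theorem 4.1 and Lemma 4.2 of \cite{dmr1} with the explicit choice $\xi=ie_{j_0}$ and reflection $r_0$ from \eqref{def xi eta r0}. The discussion preceding the statement already verified the three conditions from Definition 4.1 of \cite{dmr1}, namely $Z^2\xi=-\|Z\|^2\xi$, $r_0\xi=\xi$ and $r_0(Z\xi)=-Z\xi$, relying on the concrete structure $\|Z\|=\|c_{j_0}(Z)\|$, $Z_{j_0,j_0}=0$ (which makes $c_{j_0}(Z)\perp e_{j_0}$). Applying these results yields that $w(t)=F_\xi(\gamma(t))$ is a geodesic of $\mathscr{S}$ with $\ell(w)=L(\gamma|_{[0,t_0]})$; the same verification for $V$ gives $v(t)=F_\xi(\delta(t))$ is a sphere geodesic with $\ell(v)=L(\delta|_{[0,t_0]})$. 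Combined with item (2), this produces the chain $\ell(w)=L(\gamma)=L(\delta)=\ell(v)$.

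For item (3), the decisive observation is that $w(0)=1\cdot r_0\cdot 1\,(\xi)=\xi=v(0)$, while $\gamma(t_0)=\delta(t_0)$ implies $w(t_0)=F_\xi(\gamma(t_0))=F_\xi(\delta(t_0))=v(t_0)$. By item (1) both $w$ and $v$ are great--circle arcs in $\mathscr{S}$ with the same endpoints, and their common length $t_0\|Z\|\leq \pi/2<\pi$ shows the endpoints are not antipodal. The minimizing great--circle arc between two non--antipodal points of $\mathscr{S}$ is unique, so $w(t)=v(t)$ for all $t\in[0,t_0]$, which after unwinding the definition of $F_\xi$ is exactly the identity $e^{tZ}r_0e^{-tZ}(\xi)=e^{tV}r_0e^{-tV}(\xi)$. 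The only delicate point in the whole argument is checking that the hypotheses of Definition 4.1 of \cite{dmr1} apply to the two minimal operators $Z$ and $V$ simultaneously with the common $\xi=ie_{j_0}$; once this is granted, both the length equality and the coincidence of the image curves in the sphere are automatic from the cited results and from spherical geometry.
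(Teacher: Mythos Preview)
Your proposal is correct and follows essentially the same route as the paper: items (1) and (2) are obtained by citing Theorem 4.1 and Lemma 4.2 of \cite{dmr1} (you additionally invoke Theorem \ref{teo minimal}, which is itself a corollary of those results), and item (3) is deduced from the uniqueness of minimizing great-circle arcs between the common endpoints $w(0)=v(0)=\xi$ and $w(t_0)=v(t_0)$. Your added remarks (that $w(0)=v(0)$, and that the common length $t_0\|Z\|\le \pi/2$ rules out antipodal endpoints) make explicit what the paper leaves implicit. The ``delicate point'' you flag---whether the hypotheses of Definition 4.1 in \cite{dmr1} hold for $V$ with the \emph{same} vector $\xi=i\,e_{j_0}$---is indeed not verified for $V$ in the preceding discussion; the paper's own proof is equally terse on this and simply defers to \cite{dmr1}. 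One way to close this without extra assumptions on $V$ is to use only that $F_\xi$ is length-nonincreasing on all of $\ob$ (Corollary 3.4 of \cite{dmr1}): since $\ell(v)\le L(\delta)=L(\gamma)=\ell(w)=d_{\mathscr S}(\xi,w(t_0))=d_{\mathscr S}(\xi,v(t_0))\le \ell(v)$, equality holds throughout and $v$ is forced to be a minimizing geodesic, hence coincides with $w$.
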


Observe that the assumption of existence of 
such $t_0$ is possible even if 
$\g(t_1)=\delta(t_2)$ for $t_1\neq t_2$, since 
$\delta(t)$ can be re-scaled defining 
$\tilde{\delta}(t)=e^{t\frac{t_2}{t_1}V}be^{-t\frac {t_2}{t_1}V}$, for $t\in 
\left[0,\frac{\pi}{2\frac{t_2}{t_1}\|V\|}\right]$and then 
$\tilde{\delta}(t_1)=\g(t_1)$.

\begin{lem} \label{lem: equal columns} 
	Let $b\in\mathcal{D}(\kk)^h$ with $b_{i,i}\neq b_{j,j} $  for $i\neq j$, 
$Z, V \in(\kk(\h)+\dd(\bh))^{ah}$ be minimal operators such that  
for some $j_0\in \N$ $\|Z\|=\|c_{j_0}(Z)\|$, $c_{j_0}(Z)_{j_0}=Z_{j_0,j_0} = 0$ 
and 
$c_{j_0}(Z)_{j}=Z_{j,j_0} \neq 0$ for all $j\neq j_0$. Moreover, if there 
exists $t_0$ with $0< t_0\leq  \frac{\pi}{2\|Z\|}$ that 
satisfies 
$e^{t_0 Z}be^{-t_0 Z}=e^{s_0 
	V}be^{-s_0 V}$, for $s_0\in\left[0, \frac{\pi}{2\|V\|}\right]$, then
$$
s_0 c_{j_0}(V)=t_0 c_{j_0}(Z),
$$
that is, the  $j_0^{th}$ column of $Z$ is a multiple of the $j_0^{th}$ column of $V$.
\end{lem}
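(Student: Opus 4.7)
The plan is to apply Proposition~\ref{coro coco curvas minimales en H} after a re-parameterization of $\delta$, extract the identity of initial velocities of the two resulting spherical geodesics, and then use the hypothesis $\|Z\|=\|c_{j_0}(Z)\|$ to recover the one coordinate of the column that the spherical argument cannot see.

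First, I would rescale $V$ by setting $\tilde V=(s_0/t_0)V$ and $\tilde\delta(t)=e^{t\tilde V}be^{-t\tilde V}$, so that $\tilde\delta(t_0)=\delta(s_0)=\gamma(t_0)$ and $\tilde V$ remains a minimal operator (minimality in the quotient norm is scale-invariant). The hypothesis $s_0\leq\pi/(2\|V\|)$ translates into $t_0\leq\pi/(2\|\tilde V\|)$, so Proposition~\ref{coro coco curvas minimales en H} applies to the pair $(\gamma,\tilde\delta)$ and in particular yields
\[
e^{tZ}\,r_0\,e^{-tZ}(\xi)\;=\;e^{t\tilde V}\,r_0\,e^{-t\tilde V}(\xi),\qquad t\in[0,t_0],
\]
with $\xi=i\,e_{j_0}$ and $r_0$ the reflection defined in \eqref{defi ro general}.

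Next, differentiating this identity at $t=0$ and using $r_0(\xi)=\xi$ together with the formula $y-r_0(y)=2P^{\perp}_{e_{j_0}}(y)$, the identity reduces to $P^{\perp}_{e_{j_0}}(Z\xi)=P^{\perp}_{e_{j_0}}(\tilde V\xi)$. Since $Z\xi=i\,c_{j_0}(Z)$ already has zero $j_0$-th coordinate by hypothesis, the left-hand side equals $i\,c_{j_0}(Z)$, while the right-hand side is $i(s_0/t_0)\bigl[c_{j_0}(V)-V_{j_0,j_0}e_{j_0}\bigr]$. Matching coordinates gives $t_0\,Z_{j,j_0}=s_0\,V_{j,j_0}$ for every $j\neq j_0$.

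The step I expect to be the main obstacle is pinning down the $j_0$-th coordinate, since the spherical argument projects this direction away. To handle it, I would use the length equality $\|t_0 Z\|=\|s_0 V\|$ (both restrictions are short curves between the same endpoints, by minimality of $Z$ and $V$ together with Theorem~\ref{teo minimal}) and the hypothesis $\|Z\|=\|c_{j_0}(Z)\|$, obtaining $\|V\|^2=(t_0/s_0)^2\|c_{j_0}(Z)\|^2$. Expanding, and using the already established identity for $j\neq j_0$,
\[
\|c_{j_0}(V)\|^2=|V_{j_0,j_0}|^2+\sum_{j\neq j_0}|V_{j,j_0}|^2=|V_{j_0,j_0}|^2+(t_0/s_0)^2\|c_{j_0}(Z)\|^2=|V_{j_0,j_0}|^2+\|V\|^2,
\]
and combining with the trivial bound $\|c_{j_0}(V)\|\leq\|V\|$ forces $V_{j_0,j_0}=0$. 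This last equality together with the previously obtained identities for $j\neq j_0$ produces the desired conclusion $s_0\,c_{j_0}(V)=t_0\,c_{j_0}(Z)$.
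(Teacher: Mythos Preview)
Your argument is correct and follows essentially the same route as the paper's proof: rescale $V$ so that the two curves meet at the same parameter, invoke Proposition~\ref{coro coco curvas minimales en H} to identify the spherical projections, differentiate at $t=0$ to obtain $c_{j_0}(\tilde V)-\tilde V_{j_0,j_0}e_{j_0}=c_{j_0}(Z)$, and then use the length equality $\|t_0Z\|=\|s_0V\|$ together with $\|Z\|=\|c_{j_0}(Z)\|$ and the column bound $\|c_{j_0}(\tilde V)\|\le\|\tilde V\|$ to force $\tilde V_{j_0,j_0}=0$. The only cosmetic difference is that the paper writes $(I-r_0)$ explicitly where you use the projection $P^{\perp}_{e_{j_0}}$.
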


\begin{proof}
We can suppose that $j_0=1$.  
 First, consider $W=\frac{s_0}{t_0}V$,
 $$
 \g(t)=e^{tZ} b e^{-tZ}\ \text{and}\ \delta_0(t)=e^{tW} b e^{-tW}
 $$
 for $t\in\left[0,\frac{\pi}{2 \|Z\|}\right]$.
 Then $\gamma(t_0)=e^{s_0 
 	V}be^{-s_0 V}=e^{t_0 s_0/t_0 
 	V}be^{-t_0 s_0/t_0 V}=\delta_0(t_0)$. Moreover, since also $W$ is 
 a minimal operator, then $\gamma$ and $\delta_0$ are short curves in 
 the interval $[0,t_0]$ and then the length of 
 $\gamma\left|_{[0,t_0]}\right.$ 
 equals that of $\delta_0\left|_{[0,t_0]}\right.$. That is,	
 $t_0\|Z\|=t_0\|W\|$ (see for example Theorem 4.1 of \cite{dmr1}) and 
 therefore in 
 particular
 \begin{equation}\label{eq: norma Z igual norma W}
 \|Z\|=\|W\|.
 \end{equation}
 
 Using the preceding notations of Proposition 
 \ref{coro coco curvas minimales en H}, the assumptions made here imply that 
 $v(t)=F_\xi(\g(t))=F_\xi(\delta_0(t))=w(t)$, for 
 $t\in\left[0,t_0\right]$. Then their 
 derivatives 
 coincide for 
 every  $t\in[0,t_0]$
 \begin{equation}
 \label{eq: igualdad de derivadas de v y de w}
 w'(t)=\left( W e^{tW} r_0 e^{-tW}-e^{tW} r_0 e^{-tW} 
 W\right)(\xi)=\left(Ze^{tZ} r_0 e^{-tZ}-e^{tZ} r_0 e^{-tZ} 
 Z\right)(\xi)=v'(t).
 \end{equation}
 where $ \xi=i e_1$, $\eta=\frac{c_1(Z)}{\|c_1(Z)\|}$ and the reflection $r_0$ 
 are
 defined in \eqref{def xi eta r0} and fulfill $r_0(\xi)=\xi$ and 
 $r_0(\eta)=-\eta$.
 
 Then, if  we evaluate \eqref{eq: igualdad de derivadas de v y de w} in 
 $t=0$ 
 $$
 w'(0)=\left(Wr_0-r_0W\right)(\xi)=\left(Zr_0-r_0Z\right)(\xi)=v'(0).
 $$
 Hence, since $r_0(\xi)=\xi$ and 
 $r_0(c_1(Z))=r_0(\|c_1(Z)\|\eta)=\|c_1(Z)\|\, 
 r_0(\eta)=-\|c_1(Z)\|\eta=-c_1(Z)$,
 \begin{equation}\label{eq: relacion columnas}
 \begin{split}
 Wr_0(\xi)-r_0W(\xi)&=Zr_0(\xi)-r_0Z(\xi)\\
 W(ie_1)-r_0W(ie_1)&=Z(ie_1)-r_0Z(ie_1)\\
 ic_1(W)-ir_0(c_1(W))&=ic_1(Z)+r_0\left(ic_1(Z)\right)\\
 i (I-r_0) \left(c_1(W)\right)&=i2c_1(Z)
 \end{split}
 \end{equation}
 On the other hand, if we consider the decomposition  
 $\h=\text{gen}\{\xi\}\oplus (\text{gen}\{\xi\})^\perp$ then the identity 
 operator $I$ 
 and $r_0$ can be 
 matricially described as 
 $$
 I=\begin{pmatrix}
 1&0\\
 0&1
 \end{pmatrix}\ \text{and}\ r_0=\begin{pmatrix}
 1&0\\
 0&-1
 \end{pmatrix},$$
 respectively. 
 Then, \eqref{eq: relacion columnas} implies that  
 $(I-r_0)(c_1(W))=2 (c_1(W)-W_{1,1} e_1)= 2 c_1(Z)
 $ and then
 \begin{equation}
 \label{eq relacion cols V y Z2}
 c_1(W)-W_{1,1} e_1=   c_1(Z)
 \end{equation}
 and  $\|c_1(Z)\|=\|Z\|=\|W\|$ (see \eqref{eq: norma Z igual norma W}). 
 This implies that $\|c_1(W)-W_{1,1} 
 e_1\|=\|W\|$, and therefore
 $$
 W_{1,1}=0,
 $$ 
 since otherwise $\|c_1(W)\|>\|W\|$, which is a contradiction.
 Therefore, returning to \eqref{eq relacion cols V y Z2} we obtain that
 $c_1(W)=c_1(Z)$ that implies that
 $$
 c_1\left(\frac{s_0}{t_0} V\right)=c_1(Z)
 $$
 which ends the proof.
\end{proof}

Next, we obtain the second main result of this section.

\begin{teo} \label{teo: unicidad curva minimal}
	Let $b\in\mathcal{D}(\kk)^h$ with $b_{i,i}\neq b_{j,j} $  for $i\neq j$, 
$Z\in(\kk(\h)+\dd(\bh))^{ah}$ be a minimal operator such that  
for some $j_0\in \N$, holds that $\|Z\|=\|c_{j_0}(Z)\|$, 
$c_{j_0}(Z)_{j_0}=Z_{j_0,j_0} = 0$,
$c_{j_0}(Z)_{j}=Z_{j,j_0} \neq 0$ for all $j\neq j_0$, the sequence 
$\{\diag(Z)_{j,j}\}_{j\in\N}$ has more that one not null accumulation points, 
and 
$\g(t)=e^{tZ}be^{-tZ}$, for $t\in \left(0, 	\frac{\log 2}{8 \|Z\|}\right)$.

	Then there is not any minimal operator $V\in  
	(\kk+\C)^{ah}$ such that $\delta(t)=e^{tV} b e^{-tV}$, 
	$t\in\left[0,\frac{\pi}{2\|V\|}\right]$ satisfies $\g(t_0)=\delta(s_0)$ for 
	$t_0, s_0$ in the respective domains.
\end{teo}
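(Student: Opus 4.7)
The plan is to argue by contradiction: suppose there is a minimal $V\in(\kk+\C)^{ah}$ with $\delta(t)=e^{tV}be^{-tV}$ satisfying $\gamma(t_0)=\delta(s_0)$ for some $t_0\in(0,\log 2/(8\|Z\|))$ and $s_0\in[0,\pi/(2\|V\|)]$. Lemma~\ref{lem: gamma y delta se cruzan entonces hay una diagonal...} yields $\|s_0V\|=\|t_0Z\|$ and the diagonal-unitary identity
\[
e^{-s_0V}e^{t_0Z}=e^{t_0\diag(Z)-s_0\diag(V)},
\]
and Lemma~\ref{lem: equal columns} gives $s_0\,c_{j_0}(V)=t_0\,c_{j_0}(Z)$. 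Setting $W:=(s_0/t_0)V$, we obtain $c_{j_0}(W)=c_{j_0}(Z)$, $\|W\|=\|Z\|=\|c_{j_0}(W)\|$, $W_{j_0,j_0}=0$, and $W_{j,j_0}\neq 0$ for $j\neq j_0$, so $W$ inherits the column-norm-realizing structure of $Z$.

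The column-norm hypothesis $\|Z\|=\|c_{j_0}(Z)\|$ together with $Z^*=-Z$ forces $Z^2e_{j_0}=-\|Z\|^2e_{j_0}$; the analogous identity holds for $W$ by $\|W\|=\|Z\|$, and since $We_{j_0}=Ze_{j_0}$ an easy induction gives $Z^ne_{j_0}=W^ne_{j_0}$ for every $n\geq 0$. Hence $Z$ and $W$ agree on the two-dimensional cyclic subspace $\mathcal{H}_Z:=\mathrm{span}\{e_{j_0},c_{j_0}(Z)\}$. Applying Proposition~\ref{coro coco curvas minimales en H} and expanding, via Leibniz's rule, the scalar $\mu(t)=\langle e^{tZ}e_{j_0},e^{tW}e_{j_0}\rangle$ as
\[
\mu^{(n)}(0)=\sum_{k=0}^n\binom{n}{k}\langle Z^ke_{j_0},W^{n-k}e_{j_0}\rangle=\langle e_{j_0},Z^ne_{j_0}\rangle\,(1-1)^n=0\quad(n\geq 1),
\]
recovers analytically the same conclusion $e^{tZ}e_{j_0}=e^{tW}e_{j_0}$ on $[0,t_0]$, but gives no extra information about $Z-W$ beyond $\mathcal{H}_Z$.

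To extract the contradiction, use the structural constraint $V=K_V+i\theta I$: since $K_V$ is compact, $\diag(V)_{k,k}\to i\theta$ as $k\to\infty$, so $\diag(W)_{k,k}\to(s_0/t_0)\,i\theta$, a single accumulation point. Combining the minimality of $V$ in $(\kk+\C)^{ah}$ with the column-orthogonality characterization of minimal operators of~\cite{bottazzi_varela_LAA}, applied to both $Z$ and $W$ at their shared norm-realizing column $j_0$, should propagate the two-dimensional agreement $Z|_{\mathcal{H}_Z}=W|_{\mathcal{H}_Z}$ to the global off-diagonal equality $s_0V^{\text{off}}=t_0Z^{\text{off}}$. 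Substituting back into the identity of Step~1 and comparing the accumulation behaviour of the diagonals on both sides then forces $\diag(Z)$ to inherit the single accumulation point $(s_0/t_0)\,i\theta$ of $\diag(V)$ (up to compact corrections), contradicting the hypothesis that $\{\diag(Z)_{j,j}\}$ has more than one nonzero accumulation point.

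The main obstacle is precisely the propagation step from two-dimensional agreement on $\mathcal{H}_Z$ to the global off-diagonal equality $s_0V^{\text{off}}=t_0Z^{\text{off}}$: establishing it rigorously demands a careful BCH-type analysis of the diagonal-unitary identity (in particular the vanishing of the compact off-diagonal part of $\log(e^{-s_0V}e^{t_0Z})$, already exploited in the proof of Lemma~\ref{lem: gamma y delta se cruzan entonces hay una diagonal...}), combined with the special form $V=K_V+i\theta I$ and the minimality of $V$ in the proper subspace $(\kk+\C)^{ah}$, which together should eliminate all remaining off-diagonal freedom and isolate the rigidity producing the accumulation-point mismatch.
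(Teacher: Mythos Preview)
Your setup through Lemma~\ref{lem: gamma y delta se cruzan entonces hay una diagonal...} and Lemma~\ref{lem: equal columns} is correct, and your observation that $Z^2e_{j_0}=-\|Z\|^2e_{j_0}$ (and likewise for $W=(s_0/t_0)V$) is exactly the right computation. But from that point on you head in the wrong direction: you try to upgrade the two-dimensional agreement on $\mathcal{H}_Z=\mathrm{span}\{e_{j_0},c_{j_0}(Z)\}$ to a \emph{global} off-diagonal equality $s_0V^{\mathrm{off}}=t_0Z^{\mathrm{off}}$, and you openly concede that this ``propagation step'' is the main obstacle you cannot close. In fact this statement is neither needed nor, as far as I can see, true in general; the BCH argument you invoke only tells you that the off-diagonal part of $\log(e^{-s_0V}e^{t_0Z})$ vanishes, which is far weaker than $s_0V^{\mathrm{off}}=t_0Z^{\mathrm{off}}$.

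The missing idea is this. From the eigenvalue relations you already have, the vector $\xi+\eta:=ie_{j_0}+c_{j_0}(Z)/\|c_{j_0}(Z)\|$ is a common eigenvector of $Z$ and of $V$ (with eigenvalues $i\|Z\|$ and $i\|V\|$ respectively). Now apply the diagonal-unitary identity $e^{-s_0V}e^{t_0Z}=e^{D}$, $D=t_0\diag(Z)-s_0\diag(V)$, to this vector: the left side acts by a scalar, so $e^{D}(\xi+\eta)=\lambda(\xi+\eta)$ for some unimodular $\lambda$. The crucial point --- and the place where the hypothesis $Z_{j,j_0}\neq 0$ for all $j\neq j_0$ is actually used --- is that $(\xi+\eta)_j\neq 0$ for \emph{every} $j$. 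Reading the equation entrywise gives $e^{D_{j,j}}=\lambda$ for all $j$, and since $\|D\|$ is small (because $t_0<\log 2/(8\|Z\|)$) you can take logarithms and conclude $t_0Z_{j,j}-s_0V_{j,j}$ is the same constant for every $j$. This forces $\{\diag(Z)_{j,j}\}$ to have the same accumulation points as $\{(s_0/t_0)\diag(V)_{j,j}\}$, which has only one; contradiction. No global off-diagonal comparison is required.
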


\begin{proof} As done before, we are going to prove only the case $j_0=1$. 
	Suppose that there exists a minimal operator $V\in  (\kk+\C)^{ah}$ such 
	that $\delta(s_0)=e^{s_0V} b e^{-s_0V}=\g(t_0)$, for 
	$s_0\in\left(0,\frac{\pi}{2\|V\|}\right]$ and   
$t_0\in \left(0,   \frac{\log 2}{8 \|Z\|}\right)$. 
Note that in particular $t_0<\frac{\log 2}{8 \|Z\|}< \frac\pi{2\|Z\|}$ which 
implies that $\gamma$ is 
a short curve in all its domain.
	
	Applying Lemma \ref{lem: gamma y delta se cruzan entonces hay una 
	diagonal...}, $-\diag(s_0V)+\diag(t_0Z)\in \dah$ is such that 
	\begin{equation}
	\begin{split}
	\label{eq: la diag D es la resta de las diags de V y Z2}
	e^{-s_0V}e^{t_0Z}&=e^{-\diag(s_0V)+\diag(t_0Z)}\\
	\text{ and } \|t_0 Z\|&=\|s_0 V\|.
	\end{split}
	\end{equation}
	Then, the fact that $Z$ satisfies $\|Z\|=\|c_1(Z)\|$ (see 
	\eqref{ecuacion que cumple D0 minimal de Zo}) implies that $V$ 
	must fulfill that
	\begin{equation}
	\label{igualdad entre norma V y con Z2}
	\|V\|=\frac{t_0}{s_0}\|c_1(Z)\|
	\end{equation}

	Using Lemma \ref{lem: equal columns}  follows that $t_0 c_1(Z)=s_0 
	c_1(V)$,  
	and therefore \eqref{igualdad entre norma V y con Z2} implies that 
	$\|V\|=\frac{t_0}{s_0}\|c_1(Z)\|=\frac{t_0}{s_0}\left\|\frac{s_0}{t_0}c_1(V)\right\|
	=\|c_1(V)\|$. 
	
	Then Lemma \ref{lem realiza norma en columna entonces col ortog} implies 
	that 
	$c_1(V)$ is orthogonal to every 
	other column of $V$. This property also holds for $c_1(Z)$ and the 
	columns of 
	$Z$. 
	Recall the notation $\xi=i e_1$ and 
	$\eta= \frac{c_1(Z)}{\|c_1(Z)|\|}= 
	\frac{\frac{s_0}{t_0}c_1(V)}{\|\frac{s_0}{t_0}c_1(V)\|}= 
	\frac{c_1(V)}{\|c_1(V)\|}$, and 
	consider
	$$
	\xi + \eta=i e_1+\frac{c_1(Z)}{\|c_1(Z)\|}=i 
	e_1+\frac{c_1(V)}{\|c_1(V)\|}  \in \h .
	$$ 
	A direct computation shows that $\xi+\eta$ is an eigenvector of $Z$ and 
	$V$ of the 
	eigenvalue 
	$i\|Z\|=i\|c_1(Z)\|=i\frac{s_0}{t_0}\|c_1(V)\|=i\frac{s_0}{t_0}\|V\|$ 
	(see 
	for 
	example the proof of Theorem 2 in \cite{bottazzi_varela_DGA} for the 
	self-adjoint case). The previous comments imply 
	that 
	$$
	e^D (\xi+\eta) = e^{-s_0V}e^{t_0Z}   (\xi+\eta) =
	e^{-i\frac{s_0^2}{t_0}\|V\|} e^{i t_0\|Z\|}  (\xi+\eta) = e^{i 
		(t_0-s_0)\|Z\|}  (\xi+\eta) 
	$$
	where in the last equality we used $\|V\|=\frac{t_0}{s_0}\|Z\|$ and the series expansion of the exponentials.
	Then, using \eqref{eq: la diag D es la resta de las diags de V y Z2} we 
	write 
	\begin{equation}
	\label{exponenciales tceros y sceros}
	e^D(\xi+\eta)=e^{-\diag(s_0 V)+\diag(t_0 Z)}(\xi+\eta)=e^{i 
		(t_0-s_0)\|Z\|}  (\xi+\eta).
	\end{equation}
	Therefore, considering the equality in each entry of \eqref{exponenciales 
		tceros y sceros}  we obtain 
	$$
	e^{(-s_0 V+t_0 Z)_{j,j}} (\xi+\eta)_{j}=e^{i (t_0-s_0)\|Z\|}  
	(\xi+\eta)_{j}
	$$ 
	for all $j\in \N$. The fact that $(\xi+\eta)_{j}\neq 0$ for all $j\in 
	\mathbb{N}$ implies that 
	$e^{(-s_0 V+t_0 Z)_{j,j}}=e^{i (t_0-s_0)\|Z\|}$ for every $j$. Since we 
	are 
	supposing that $t_0\in \left(0,  	\frac{\log 2}{8 \|Z\|} \right]$, then 
	the 
	exponent 
	$(-s_0 V+t_0 Z)_{j,j}$ is small enough and we can 
	conclude that 
	$$
	(-s_0 V+t_0 Z)_{j,j}=-s_0 V_{j,j}+t_0 (Z)_{j,j}=i (t_0-s_0)\|Z\|
	$$
	for all $j\in \N$. But this is a contradiction 
	since we are supposing $\diag(V)=d+i\theta I$, with $d\in \kah$, 
	$\theta\in\R$, 
	and we know that $\diag(Z)$ has more than one (not null) limit. 
	Therefore, a 
	minimal operator $V\in (\aaa)^{ah}$ cannot form a curve $\delta(t)=e^{tV} b 
	e^{-tV}$ that crosses $\g$ for $t>0$ in a certain small enough neighborhood 
	of 
	$b$.
\end{proof}

\begin{cor}
	If we consider $Z_2$ as defined in \eqref{defi Z2} for every neighborhood 
	$\mathcal{X}_b$ of $b$ in $\ob$ there 
	exist elements  $(e^{t_0Z_2}be^{-t_0Z_2})\in\mathcal{X}_b$ such that there 
	is 	not any short curve of the form $\delta(t)=e^{tV}be^{-tV}$ with $V\in 
	(\kk+\C)^{ah}$ 	that joins $b$ with $e^{t_0Z_2}be^{-t_0Z_2}$. In fact, this 
	is true for  $e^{tZ_2}be^{-tZ_2}$, for every $t$ in certain interval.
\end{cor}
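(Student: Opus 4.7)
The plan is to reduce the corollary to a direct application of Theorem \ref{teo: unicidad curva minimal} with $Z=Z_2$, together with a short continuity argument for the neighborhood claim. First I would verify that $Z_2=Z_o+D_0$ meets all the hypotheses of Theorem \ref{teo: unicidad curva minimal} taking $j_0=1$. By the construction recalled in \eqref{defi Z2}, $Z_2\in\kah+\dah$ is already a minimal operator. The three properties of $Z_2$ listed just before Lemma \ref{lem: sobre geodesica con igual veloc inicial} give $\|Z_2\|=\|c_1(Z_2)\|$, $(Z_2)_{1,1}=0$ and $(Z_2)_{j,1}\neq 0$ for every $j\neq 1$, matching the column hypothesis of the theorem. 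The oscillant character of $D_0$, namely $\lim_k(D_0)_{2k,2k}\neq\lim_k(D_0)_{2k+1,2k+1}$ with both limits nonzero, supplies two distinct nonzero accumulation points of $\{\diag(Z_2)_{j,j}\}_{j\in\N}$, so the diagonal hypothesis also holds.

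Plugging $Z_2$ into Theorem \ref{teo: unicidad curva minimal} then yields: for every $t\in\left(0,\frac{\log 2}{8\|Z_2\|}\right)$ there is no minimal $V\in(\kk+\C)^{ah}$ and no $s\in\left(0,\frac{\pi}{2\|V\|}\right]$ with $e^{tZ_2}be^{-tZ_2}=e^{sV}be^{-sV}$. This already establishes the ``in fact'' clause of the corollary: the whole arc $\left\{e^{tZ_2}be^{-tZ_2}:t\in(0,\tfrac{\log 2}{8\|Z_2\|})\right\}$ consists of endpoints that cannot be joined to $b$ by a short curve of the stated exponential form.

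For the neighborhood part I would combine this with continuity of $\gamma(t)=e^{tZ_2}be^{-tZ_2}$ at $t=0$: Corollary \ref{coro Z2 minimal} gives $\dist(b,\gamma(t))=t\|Z_2\|$ for small $t$, so $\gamma$ is continuous at $0$ in the rectifiable distance \eqref{def: distancia rectificable}, with $\gamma(0)=b$. Given any neighborhood $\mathcal{X}_b$ of $b$ in $\ob$ one can therefore choose $t_0\in(0,\log 2/(8\|Z_2\|))$ small enough that $\gamma(t_0)\in\mathcal{X}_b$; the previous paragraph then provides the required obstruction at $\gamma(t_0)$. There is essentially no obstacle here beyond the bookkeeping between the several notions of minimal lift ($\bah$, $\kah+\dah$, $(\kk+\C)^{ah}$) that all induce the same Finsler metric on $\ob$ (by Remark \ref{rem: orbitas y metrica finlser iguales}) but generate different families of exponential short curves; the substantive work has already been done inside Theorem \ref{teo: unicidad curva minimal}.
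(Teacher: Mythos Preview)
Your proposal is correct and follows essentially the same approach as the paper, which simply observes that $Z_2$ satisfies all the hypotheses of Theorem \ref{teo: unicidad curva minimal} and invokes it directly. Your version just spells out more explicitly the verification of those hypotheses and the continuity argument ensuring $\gamma(t_0)\in\mathcal{X}_b$ for small $t_0$, both of which the paper leaves implicit.
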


\begin{proof} Observe that the operator $Z_2$ satisfies every assumption needed 
by the operator $Z$ in Theorem \ref{teo: unicidad curva minimal}.Then the proof 
is a 
direct application of the previous theorem.
\end{proof}

\begin{rem}
	Note that the situation mentioned in the previous corollary applies to the 
	geodesic neighborhood $\mathcal{W}_b$ obtained in Theorem \ref{teo: hopf 
		rinow local} even when in that case every element of $\mathcal{W}_b$ is 
	reached by a short curve.
\end{rem}

\section{Appendix}
In this section we include various results concerning minimal anti-Hermitian 
operators in $\bah$.

\begin{lem} \label{lem realiza norma en columna entonces col ortog}
For a given fixed orthonormal basis of 
$\h$, let $V\in \bah$ be such that there 
exists $j_0
	\in \N$ that satisfies $\|V\|=\|c_{j_0}(V)\|$ (where $c_j(V)$ is the 
	$j^{\text{th}}$ column of the corresponding matrix representation of $V$ in 
	the fixed basis). Then   
	\begin{equation}
	\label{eq: cj0V perp cjV}
		c_{j_0}(V)\perp c_j(V) , \ \ 
		\forall 
	j\neq j_0.
	\end{equation}
	If 
	$\left(c_{j_0}(V)\right)_{j_0}=V_{j_0,j_0}=0$
	 then $V$ is a minimal operator.
	
	Moreover, if  
	$c_{j_0}(V)_{j}=V_{j,j_0}\neq0$ for 
	all $j\neq 
	j_0$, then $V$
	has a unique 
	minimizing diagonal defined by
		\begin{equation}
		\label{eq: explicitacion de la 
		unica diagonal de V minimal}
		V_{j,j}=-\frac{\left\langle c_j(V)_{\widecheck
				j},c_{j_0}(V)_{\widecheck j}\right\rangle }{V_{j,j_0} }, 
				\text{ for } j\neq j_0
 		\end{equation}
		\\
		where $c_k(X)_{\widecheck l}\in 
		\h\ominus 
		\text{gen}\{e_l\}$ is the  
		element obtained  after taking off 
		the $l^{\text{th}}$ entry of 
		$c_k(X)\in\h$.
\end{lem}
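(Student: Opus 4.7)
My plan is to extract all four claims (column orthogonality, minimality, uniqueness, and the explicit formula) from a single structural consequence of the hypothesis $\|V\|=\|c_{j_0}(V)\|$: namely, that $e_{j_0}$ is a top eigenvector of $V^{*}V$. Indeed, anti-Hermiticity $V^{*}=-V$ gives $|V_{i,j}|=|V_{j,i}|$, hence $\|c_j(V)\|=\|Ve_j\|$ for every $j$, and so the hypothesis says exactly that $\|Ve_{j_0}\|=\|V\|$. The standard argument that a positive operator $T$ with $\langle Tx,x\rangle=\|T\|$ at a unit vector $x$ must satisfy $Tx=\|T\|x$ (seen by expanding $\|(T-\|T\|I)x\|^{2}$) applied to $T=V^{*}V=-V^{2}$ then produces $V^{2}e_{j_0}=-\|V\|^{2}e_{j_0}$.

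From here, \eqref{eq: cj0V perp cjV} becomes a one-line computation: using $V_{i,j_0}=-\overline{V_{j_0,i}}$,
\[
\langle c_{j_0}(V),c_j(V)\rangle=\sum_i V_{i,j_0}\overline{V_{i,j}}=-\overline{\sum_i V_{j_0,i}V_{i,j}}=-\overline{\langle V^{2}e_{j_0},e_j\rangle}=0\qquad(j\neq j_0).
\]
For minimality (Part~2), the extra hypothesis $V_{j_0,j_0}=0$ says $Ve_{j_0}\perp e_{j_0}$, so for every $D\in\dah$ we have the orthogonal decomposition $(V+D)e_{j_0}=Ve_{j_0}+D_{j_0,j_0}e_{j_0}$, whence
\[
\|V+D\|^{2}\geq\|(V+D)e_{j_0}\|^{2}=\|V\|^{2}+|D_{j_0,j_0}|^{2}\geq\|V\|^{2}.
\]

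For the uniqueness in Part~3, I push the previous chain to equalities: $\|V+D\|=\|V\|$ forces $D_{j_0,j_0}=0$ and also makes $e_{j_0}$ norm-attaining for $V+D$. Applying Part~1 to the anti-Hermitian $V+D$ yields $c_{j_0}(V+D)\perp c_j(V+D)$ for $j\neq j_0$; subtracting the analogous identity for $V$, and using that $V$ and $V+D$ agree off the diagonal (and at the $(j_0,j_0)$ entry), the equation collapses to $V_{j,j_0}\,\overline{D_{j,j}}=0$, which together with $V_{j,j_0}\neq 0$ forces $D_{j,j}=0$; hence $D=0$. For the explicit formula, I isolate the $i=j$ summand in $0=\langle c_{j_0}(V),c_j(V)\rangle$ to obtain $V_{j,j_0}\,\overline{V_{j,j}}=-\sum_{i\neq j}V_{i,j_0}\overline{V_{i,j}}$, divide by $V_{j,j_0}\neq 0$, apply $\overline{V_{j,j}}=-V_{j,j}$ (since $V_{j,j}$ is purely imaginary by anti-Hermiticity), and recognize the remaining sum as the truncated inner product, arriving at \eqref{eq: explicitacion de la unica diagonal de V minimal}. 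The only nontrivial ingredient in the entire argument is the infinite-dimensional norm-attainment step at the start; everything else is bookkeeping with matrix entries and the two orthogonality identities.
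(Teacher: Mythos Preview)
Your argument is correct, and it is genuinely different from the paper's proof in all three parts.

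For the orthogonality \eqref{eq: cj0V perp cjV}, the paper argues variationally: it sets $f(t)=\|V(\cos t\,e_{j_0}+\sin t\,e_j)\|^2$, observes $f(0)=\|V\|^2$ is a maximum, and reads off $\text{Re}\langle c_{j_0}(V),c_j(V)\rangle=0$ from $f'(0)=0$ (and similarly for the imaginary part using $\cos t\,e_{j_0}+i\sin t\,e_j$). You instead pass through the eigenvector statement $V^{*}Ve_{j_0}=\|V\|^{2}e_{j_0}$ and get the orthogonality in one line; the paper in fact mentions this alternative right after its proof, via Sain's theorem (Corollary~\ref{coro del teo sain}). For minimality, the paper invokes an external criterion (Theorem~2.2 of \cite{andruchow_mata_recht_mendoza_varela_LAA}), whereas your Pythagorean estimate $\|(V+D)e_{j_0}\|^{2}=\|V\|^{2}+|D_{j_0,j_0}|^{2}$ is self-contained. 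For uniqueness, the paper splits into the cases $D_{j_0,j_0}\neq 0$ (where $\|c_{j_0}(V+D)\|$ grows) and $D_{j_0,j_0}=0$ (where it evaluates $(V+D)$ on the unit vector $c_{j_0}(V)/\|c_{j_0}(V)\|$ and shows the norm strictly increases). Your route---forcing equality in the minimality chain, concluding that $e_{j_0}$ is again norm-attaining for $V+D$, and reapplying Part~1 to $V+D$---is cleaner and reuses the machinery already built. The trade-off is that the paper's uniqueness argument does not need the norm-attainment eigenvector step, while yours makes that single spectral fact do all the work. Two small remarks: the identity $\|c_j(V)\|=\|Ve_j\|$ holds because $c_j(V)=Ve_j$ by definition, so your appeal to anti-Hermiticity there is unnecessary; and in the final formula derivation you should keep track of which slot of the inner product is conjugate-linear to land exactly on \eqref{eq: explicitacion de la unica diagonal de V minimal}, though the substantive claim (unique determination of $V_{j,j}$ from the orthogonality) is of course unaffected.
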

\begin{proof}
	Note that we can suppose that $j_0=1$ to simplify the notation. Similar 
	considerations could be done for the $j_0^{th}$ column. 
	
	In the matrix representation corresponding to the fixed orthonormal basis $\{e_j\}_{j\in \N}$, we can consider  
	$$
	x= \cos(t) e_1+\sin(t) e_j , \text{ for } j\neq 1.
	$$
	Observe that $\|x\|=1$, and then it must hold $\|Vx\|\leq \|V\|$. 
	Let us consider $f:\R\to \R$ such that 
	\begin{equation}\label{ec definicion f(t)} 
	\begin{split}
	f(t)&=\|V(\cos(t) e_1+\sin(t) e_j)\|^2=\|\cos(t) V(e_1)+\sin(t) V(e_j)\|^2\\
	&= \| \cos(t)  c_1(V)+\sin(t)   c_j(V)\|^2
	\\ &=\langle\cos(t)  c_1(V)+\sin(t)   c_j(V) , 
	\cos(t)  c_1(V)+\sin(t)   c_j(V) \rangle \\
	&=\cos^2(t) \| c_1(V)\|^2+\sin^2(t) \| c_j(V)\|^2+ 
	2 \cos(t)  \sin(t) \  \text{Re}\langle c_1(V),c_j(V) \rangle .\\
	\end{split}
	\end{equation}
	Then
	\begin{equation*}
	\begin{split}
	f'(t) =&-2\sin(t) \cos (t) \| c_1(V)\|^2+2\sin(t) \cos (t) \| c_j(V)\|^2+ 
	\\ &+2 \left(\cos(t)^2-  \sin(t)^2\right) \  \text{Re}\langle c_1(V),c_j(V) \rangle \\
	=& \sin(2t)\left( \|c_j\|^2-\|c_1\|^2\right)+ 2 \cos(2t) \  \text{Re}\langle c_1(V),c_j(V) \rangle.
	\end{split}
	\end{equation*}
	
	Then, if $\text{Re}\langle c_1(V),c_j(V) \rangle>0$
	$$
	f'(0)=2 \text{Re}\langle c_1(V),c_j(V) \rangle >0
	\ \text{ and } \ f(0)= \|c_1(V)\|^2
	$$
	and then $f'(t_1)>0$ for some $t_1>0$, which implies that $f(t_1)>\|c_1(V)\|^2$, a contradiction.
	
	On the other hand, if $\text{Re}\langle c_1(V),c_j(V) \rangle<0$
	$$
	f'(0)=2 \text{Re}\langle c_1(V),c_j(V) \rangle <0
	\ \text{ and } \ f(0)= \|c_1(V)\|^2
	$$
	and then $f'(t_2)< 0$ for some $t_2<0$, which implies that $f(t_2)>\|c_1(V)\|^2$, a contradiction.
	
	Therefore it must be $\text{Re}\langle c_1(V),c_j(V) \rangle =0$.
	
	Now consider $z= \cos(t) e_1+ i 
	\sin(t) e_j\in\h$, that also satisfies 
	$\|z\|=1$. Then following the steps we 
	used in the case of $x =\cos(t) e_1+ 
	\sin(t) e_j$ but using $z$, it can be 
	proved that $0=\text{Re}(-i)\langle 
	c_1(V),c_j(V) \rangle 
	=\text{Im}\langle c_1(V),c_j(V) 
	\rangle$.

	In order to prove the last part of the lemma observe that 
		\begin{enumerate}
			\item as proved in the first part of this lemma, $c_{1}(V)\perp 
			c_{j}(V)$ for all $j\neq 1$,
			\item and the assumptions 
			\begin{enumerate}
				\item $c_{1}(V)_{1}=V_{1,1}=0$,  
				\item $c_{1}(V)_{j}=V_{j,1}\neq 0$ for $j\neq 1$ 
				\item and the equality 		$\|V\|=c_{1}(V)$ 
			\end{enumerate}
		\end{enumerate} 
	Then the proof of the minimality of $V$ follows applying Theorem 
	2.2 from \cite{andruchow_mata_recht_mendoza_varela_LAA} substituting 
	$\mathcal{A}$ with $\bh$, $\mathcal{B}$ with $\dd(\bh)$, $\rho$ 
	with the identity, $\xi$ with $i \, e_{1}$ and $Z$ with $V$.
	Note 
	that we only need assumptions (1), (2)(a) and (2)(c) to prove that 
	$$
	V^2 (i\, e_{1})=-\|V\|^2 i\, e_{1} \text{ and that } \langle V (i\, 
	e_{1}), D(i\, e_{1})\rangle=\langle i \, c_{1}(V), i\, 
	D_{1,1}\rangle=0
	$$
	in order to fulfill the assumptions of Theorem 
	2.2 from \cite{andruchow_mata_recht_mendoza_varela_LAA}.
	
	The equality \eqref{eq: explicitacion de la unica diagonal de 
	V minimal} follows after the condition $c_{1}(V)\perp c_j(V)$ for $j\neq 
	1$ and the fact that $c_{1}(V)_j=V_{j,1}\neq 0$ for those $j$. 
	
	Moreover, if we consider $V+D$, for $D\neq 0$, and $D_{1,1}\neq 0$, 
	follows that 
	$\|c_{1}(V+D)\|=\|c_{1}(V)+D_{1,1}e_{1}\|>\|c_{1}(V)\|=\|V\|$ 
	and therefore $V+D$ cannot be minimal. Now suppose $D_{1,1}= 0$.
	Direct computations show that 
	\begin{equation}
	\begin{split}
	\left\|(V+D) 	\frac{c_{1}(V)}{\|c_{1}(V)\|}\right \|
	&=\frac1{\|c_{1}(V)\|}\|V c_{1}(V) 
	+ D c_{1}(V)\|=\frac1{\|c_{1}(V)\|} \left\|-\|c_{1}(V)\|^2 e_{1}+ 
	D  c_{1}(V)\right\|
	\\
	&=\left\|-\|c_{1}(V)\| 
	e_{1}+\frac1{\|c_{1}(V)\|}  D  c_{1}(V)\right\|> 
	\|c_{1}(V)\|=\|V\|. 
	\end{split}
	\end{equation}
	In the previous strict inequality we have used (2)(a), (2)(b), $D\neq 0$ 
	and $D_{1,1}=0$.

Then $\|V+D\|>\|V\|$ for $D\neq 0$, which implies that the diagonal defined in 
\eqref{eq: explicitacion de la unica diagonal de V minimal} is the only possible
minimizing 
	diagonal of $V$.
\end{proof}
Another way to prove equation 
\eqref{eq: cj0V perp cjV} of the first 
part of the previous Lemma \ref{lem realiza norma en columna entonces col 
ortog} is using Corollary \ref{coro del teo sain} of the following 
theorem.
\begin{teo}[Sain, \cite{sain}]\label{sain}
	Let $\h_1$, $\h_2$ be Hilbert spaces and $T\in \mathcal{B}(\h_1,\h_2)$. Given
	any $x\in\h_1$, $\|Tx\|=\|T\|$ if and only if the following two conditions are
	satisfied:
	\begin{enumerate}[label=\roman*)]
		\item $\left\langle x, y \right\rangle = 0$ implies that $\left\langle  Tx, Ty \right\rangle = 0$,
		\item $\sup\{\|Ty\| : \|y\| = 1, \left\langle  x, y \right\rangle = 0\} \leq \|Tx\|$.
	\end{enumerate}
\end{teo}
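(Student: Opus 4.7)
The plan is to prove this characterization of norm-attaining vectors directly from Hilbert space geometry, by splitting into the two implications. The forward direction will come from a first-order perturbation argument, while the converse will follow from an orthogonal decomposition together with Pythagoras' identity. Throughout I assume $\|x\|=1$, which is implicit in the statement.

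For the forward direction, I would first observe that (ii) is immediate, since for every unit vector $y$ we have $\|Ty\|\leq\|T\|=\|Tx\|$, so the supremum in (ii) is bounded by $\|Tx\|$. For (i), I would fix $y$ with $\langle x,y\rangle=0$ and $\|y\|=1$, and consider the Rayleigh-type function
$$
g(t)=\frac{\|T(x+ty)\|^2}{\|x+ty\|^2}=\frac{\|Tx\|^2+2t\,\mathrm{Re}\langle Tx,Ty\rangle+t^2\|Ty\|^2}{1+t^2},\quad t\in\R.
$$
Since $g$ attains its maximum $\|T\|^2=\|Tx\|^2$ at $t=0$, the first-order condition $g'(0)=0$ forces $\mathrm{Re}\langle Tx,Ty\rangle=0$. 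Applying the same argument with $y$ replaced by $iy$ (still orthogonal to $x$ and of unit norm) eliminates the imaginary part, so $\langle Tx,Ty\rangle=0$, which is exactly (i). This is precisely the flavour of computation carried out in Lemma~\ref{lem realiza norma en columna entonces col ortog}, where $x=e_1$ and $y=e_j$ plays the role of the perturbing direction.

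For the converse, I would take an arbitrary unit vector $z\in\h_1$ and write $z=\langle z,x\rangle x+w$ with $w\perp x$, so that $|\langle z,x\rangle|^2+\|w\|^2=1$. Condition (i) gives $\langle Tx,Tw\rangle=0$ (applying it to $w/\|w\|$ if $w\neq 0$; the conclusion is trivial if $w=0$), so by Pythagoras
$$
\|Tz\|^2=|\langle z,x\rangle|^2\,\|Tx\|^2+\|Tw\|^2.
$$
Condition (ii) then yields $\|Tw\|\leq\|w\|\cdot\|Tx\|$, and combining,
$$
\|Tz\|^2\leq(|\langle z,x\rangle|^2+\|w\|^2)\|Tx\|^2=\|Tx\|^2.
$$
Taking the supremum over unit $z$ gives $\|T\|\leq\|Tx\|$, and the reverse inequality is trivial.

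The hard part will be almost nothing in a Hilbert space: the only subtleties are (a) handling the complex case, which requires running the perturbation argument with both $y$ and $iy$ to recover the full complex inner product from its real part, and (b) treating the degenerate case $w=0$ in the converse separately. Note that Sain's original result is stated for general Banach spaces using Birkhoff--James orthogonality, where the proof is substantially more delicate; here the presence of the inner product trivialises both directions.
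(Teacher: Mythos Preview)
The paper does not prove this theorem; it is quoted from \cite{sain} without proof and used only through Corollary~\ref{coro del teo sain}. Your argument is correct and self-contained.

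For comparison: your forward-direction computation is exactly the one the paper carries out in the proof of Lemma~\ref{lem realiza norma en columna entonces col ortog} (the function $f(t)$ there is your numerator, and the conclusion $\mathrm{Re}\langle c_1(V),c_j(V)\rangle=0$ followed by the $iy$-trick is identical). The converse direction is not argued anywhere in the paper, and your orthogonal-decomposition-plus-Pythagoras step is the natural way to do it. As you correctly observe, Sain's original theorem is stated for arbitrary Banach spaces via Birkhoff--James orthogonality, where neither direction is this cheap; the Hilbert-space version you have written is all the paper actually needs.
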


\begin{cor}\label{coro del teo sain}
	Consider $\h=\h_1=\h_2$ and $V\in \bah$. Then there exists $j_0\in \N$ such 
	that $\|V\|=\|V(e_{j_0})\|=\|c_{j_0}(V)\|$, if and only if
	\begin{enumerate}[label=\roman*)]
		\item  $\left\langle e_{j_0},e_j 
		\right\rangle=0$ implies that $\left\langle Ve_{j_0},Ve_j 
		\right\rangle=\left\langle 
		c_{j_0}(V),c_{j}(V) \right\rangle = 0$ for each $j\in \N$, $j\neq j_0$, 
			\item $\sup\{\|c_j(V)\| : j\in \N\} \leq \|c_{j_0}(V)\|$.
	\end{enumerate}
\end{cor}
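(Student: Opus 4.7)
The plan is to derive the corollary as a direct specialization of Sain's theorem (Theorem \ref{sain}) to the case $\h_1 = \h_2 = \h$, $T = V$, and $x = e_{j_0}$, matching Sain's conditions to the ones stated in the corollary.

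For the direct implication, assume $\|V\| = \|V e_{j_0}\| = \|c_{j_0}(V)\|$. Sain's condition (i) applied with $y = e_j$ (which is orthogonal to $e_{j_0}$ for each $j \neq j_0$) gives $\langle V e_{j_0}, V e_j\rangle = \langle c_{j_0}(V), c_j(V)\rangle = 0$, establishing (i). Restricting the supremum in Sain's (ii) to unit basis vectors $y = e_j$ yields $\|c_j(V)\| = \|V e_j\| \leq \|V e_{j_0}\| = \|c_{j_0}(V)\|$ for each $j \neq j_0$; combining this with the trivial case $j = j_0$ gives (ii).

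For the reverse implication, one upgrades the basis-indexed hypotheses (i) and (ii) to Sain's full conditions on arbitrary unit vectors $y \in \h$ with $\langle e_{j_0}, y\rangle = 0$. Expanding such $y = \sum_{j \neq j_0} \alpha_j e_j$, and using the boundedness of $V$ together with continuity of the inner product,
\[
\langle V e_{j_0}, V y\rangle = \sum_{j \neq j_0} \overline{\alpha_j}\, \langle c_{j_0}(V), c_j(V)\rangle = 0,
\]
by the corollary's (i), verifying Sain's (i). For Sain's (ii) one exploits the antihermitian structure of $V$: condition (i) combined with $V^* = -V$ yields $V^*V e_{j_0} = \|c_{j_0}(V)\|^2 e_{j_0}$, so $V^*V$ preserves the orthogonal decomposition $\h = \text{gen}\{e_{j_0}\} \oplus e_{j_0}^\perp$, and the column-norm estimate (ii) then controls the norm of $V$ restricted to $e_{j_0}^\perp$. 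Sain's theorem then delivers $\|V\| = \|V e_{j_0}\|$.

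The main obstacle is this last step: the column-indexed bound (ii) controls each $\|c_j(V)\|$ individually, but the columns $c_j(V)$ with $j \neq j_0$ are in general not pairwise orthogonal, so a priori this does not control $\left\|\sum_{j \neq j_0} \alpha_j c_j(V)\right\|$ for arbitrary coefficients $(\alpha_j)$. The antihermitian hypothesis on $V$ together with the invariant-subspace decomposition forced by (i) is precisely what allows this reduction to be carried out.
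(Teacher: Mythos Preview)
Your forward implication and your upgrade of condition (i) from basis vectors to arbitrary $y\perp e_{j_0}$ are both correct, and this is exactly what the paper's one-line proof does for item (i). The gap is in the reverse implication, at condition (ii). You correctly flag the obstacle---the corollary's (ii) only bounds the individual column norms $\|Ve_j\|$, whereas Sain's (ii) demands $\|Vy\|\leq\|Ve_{j_0}\|$ for \emph{every} unit $y\perp e_{j_0}$---but the resolution you sketch does not work. From (i) you indeed get $V^*Ve_{j_0}=\|c_{j_0}(V)\|^2 e_{j_0}$ and hence that $V^*V$ leaves $e_{j_0}^\perp$ invariant; however, this says nothing about the size of $\big\|(V^*V)|_{e_{j_0}^\perp}\big\|$. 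The anti-Hermitian hypothesis only gives normality of $V$; it does not force the remaining columns $c_j(V)$, $j\neq j_0$, to be mutually orthogonal, so the column-norm bound cannot be promoted to an operator-norm bound on $e_{j_0}^\perp$.

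Concretely, take
\[
V=i\begin{pmatrix}3/2&0&0\\0&1&1\\0&1&1\end{pmatrix}
\]
(padded with zeros in $\h$). Then $V\in\bah$, and with $j_0=1$ condition (i) holds (the first column is orthogonal to every other) and condition (ii) holds since $\|c_1(V)\|=3/2\geq\sqrt{2}=\|c_2(V)\|=\|c_3(V)\|$. Yet $\|V\|=2\neq 3/2=\|c_{j_0}(V)\|$, because the block $i\left(\begin{smallmatrix}1&1\\1&1\end{smallmatrix}\right)$ acting on $e_1^\perp$ has norm $2$. Thus the reverse implication of the corollary is false as stated, and no argument---neither yours nor the paper's proof, which likewise addresses only item (i)---can close this gap. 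The forward direction, which is all that is actually used in the paper (to obtain \eqref{eq: cj0V perp cjV}), is unaffected.
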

\begin{proof}
	The proof is a direct consequence of 
	Theorem \ref{sain} after observing 
	that 
in item i) of the corollary is equivalent to say that
$\left\langle e_{j_0},y\right\rangle=0$ implies that 
 $\left\langle Ve_{j_0},Vy\right\rangle=0$ for any $y\in\h$. 
\end{proof}

\bibliographystyle{abbrv}

\bibliography{nuestra_bibliografia}
\end{document}